\documentclass[12pt, reqno]{amsart}
\usepackage{amsmath, amsthm, amscd, amsfonts, amssymb, graphicx, color}
\usepackage[bookmarksnumbered, colorlinks, plainpages]{hyperref}
\hypersetup{colorlinks=true,linkcolor=red, anchorcolor=green, citecolor=cyan, urlcolor=red, filecolor=magenta, pdftoolbar=true}

\textheight 22.5truecm \textwidth 14.5truecm
\setlength{\oddsidemargin}{0.35in}\setlength{\evensidemargin}{0.35in}

\setlength{\topmargin}{-.5cm}

\newtheorem{theorem}{Theorem}[section]
\newtheorem{lemma}[theorem]{Lemma}

\newtheorem{cor}[theorem]{Corollary}
\theoremstyle{definition}

\theoremstyle{remark}
\newtheorem{remark}[theorem]{\bf{Remark}}
\numberwithin{equation}{section}
\begin{document}

\title [Sharp inequalities for the numerical radius]{Sharp inequalities for the numerical radius of Hilbert space operators and operator matrices} 

\author{Pintu Bhunia, Kallol Paul and Raj Kumar Nayak}

\address{(Bhunia) Department of Mathematics, Jadavpur University, Kolkata 700032, West Bengal, India}
\email{pintubhunia5206@gmail.com}

\address{(Paul) Department of Mathematics, Jadavpur University, Kolkata 700032, West Bengal, India}
\email{kalloldada@gmail.com}

\address{(Nayak) Department of Mathematics, Jadavpur University, Kolkata 700032, West Bengal, India}
\email{rajkumarju51@gmail.com}

\thanks{First and third author would like to thank UGC, Govt. of India for the financial support in the form of JRF. Prof. Kallol Paul would like to thank RUSA 2.0, Jadavpur University for the partial support.}


\subjclass[2010]{Primary 47A12, Secondary 47A63, 47A30.}
\keywords{Hilbert space; numerical radius; operator norm; operator matrix. }

\date{}
\maketitle
\begin{abstract}
We present new upper and lower bounds for the numerical radius of a bounded linear operator defined on a  complex  Hilbert  space,  which improve on the existing bounds. Among many other  inequalities proved in this article, we show that for a non-zero bounded linear operator $T$ on a Hilbert space $H,$   $w(T)\geq \frac{\|T\|}{2}+\frac{m(T^2)}{2\|T\|},  $ where $w(T)$ is the numerical radius of $T$ and $m(T^2)$ is the Crawford number of $T^2$.  This substantially improves on the existing inequality $w(T)\geq \frac{\|T\|}{2} .$ We also  obtain some upper and lower bounds for the numerical radius of operator matrices and illustrate with numerical examples that these bounds are better  than the existing bounds. 
\end{abstract}

\section{\textbf{Introduction}}

\noindent Computation of the numerical radius of a bounded linear operator defined on complex Hilbert spaces is an interesting embroiled problem. Till date one  can compute the exact numerical radius for certain special class of operators and  for this reason estimation of  bounds  of the numerical radius  is a very important  problem. Our aim in this article to present better estimation of  the numerical radius of bounded linear operators and operator matrices. The following notations and terminologies are necessary to begin with.
Let $\mathbb{H}_1$ and $\mathbb{H}_2$ be two complex Hilbert spaces with inner product $\langle.,.\rangle.$  Let $B(\mathbb{H}_1,\mathbb{H}_2)$ denote the set of all bounded linear operators from $\mathbb{H}_1$ into $\mathbb{H}_2,$  if $\mathbb{H}_1=\mathbb{H}_2=\mathbb{H}$(say), then we write $B(\mathbb{H}_1,\mathbb{H}_2)=B(\mathbb{H}).$  For $T\in B(\mathbb{H})$, let $\|T\|$ and $c(T)$ denotes the usual operator norm and minimum norm of $T$ respectively, defined as 
\begin{eqnarray*}
\|T\|&=&\sup \left\{\|Tx\|:x\in \mathbb{H}, \|x\|=1 \right \} ~~\mbox{and}\\
 c(T)&=&\inf \left\{\|Tx\|:x\in \mathbb{H}, \|x\|=1 \right \},  
\end{eqnarray*}
where $\|.\|$ is the norm on $\mathbb{H}$ induced from the inner product $\langle.,.\rangle.$ Let $\sigma(T)$ denote the spectrum  of $T$, and $ r(T)$, the spectral radius of $T$, defined as 
\[r(T)=\sup \left\{ |\lambda|: \lambda \in \sigma(T)\right \}.\]
The numerical range  of $T,$  denoted as $W(T)$, is defined as 
\[W(T)=\left\{\langle Tx,x \rangle :x\in \mathbb{H}, \|x\|=1 \right \}.\]
 The numerical radius $w(T)$ and Crawford number $m(T)$ are defined as
\begin{eqnarray*} 
w(T)&=&\sup \left\{ |\lambda|: \lambda \in W(T)\right \} ~~\mbox{and}\\
m(T)&=&\inf \left\{ |\lambda|: \lambda \in W(T)\right \}.
\end{eqnarray*}
It is well known that the numerical range is a convex subset of the scalar field and closure of the numerical range contains the spectrum, i.e.,  $\sigma(T)\subseteq \overline{W(T)}$, so $r(T)\leq w(T).$  The numerical radius  $w(.)$ acts as  a norm on $B(\mathbb{H})$ and is equivalent to the operator norm $\|.\|$ satisfying the following inequality
\[\frac{\|T\|}{2} \leq \max \left \{r(T), \frac{\|T\|}{2} \right\}\leq w(T)\leq \|T\|.\]
For further  properties of numerical range and numerical radius, we refer  reader to \cite{B,GR}. \\

\noindent Over the years many  eminent mathematicians have studied and improved on the above inequality, to cite a few of them are  \cite{D,GK,HKS,KMY,K,S,Y}.  Recently we \cite{BBP1,BBP,BBP2,PB2,PB} have developed some bounds for the numerical radius and applied them to estimate zeros of polynomials. In 1963, Bernau and Smithies \cite{BS} gave an elegant proof of the inequality $ w(T) \geq \frac{1}{2} \|T\|$ using parallelogram law. In this paper we improve on this inequality  to prove that $ w(T) \geq \frac{1}{2} \|T\| + \frac{m(T^2)}{2\|T\|}$. We  generalize the  inequality \cite[Lemma 3]{BS} substantially to obtain new inequalities for the  numerical radius.  We further obtain bounds for the numerical radius of an $n \times n$ operator matrix $T$, where $ T =(A_{ij})$  is defined on the complex Hilbert space $\mathbb{H}=\mathbb{H}_1 \bigoplus \mathbb{H}_2 \bigoplus \ldots \bigoplus \mathbb{H}_n, $ 
where $\mathbb{H}_i$ $(i=1,2, \ldots, n) $ are complex Hilbert spaces. We show that the bounds obtained here  improve on and generalize  the existing bounds given in \cite{HKS,KMY}.

\section{\textbf{Inequalities for the numerical radius of product of operators}}

\noindent  We begin this section with the following inequality proved in  \cite[Lemma 3]{BS}. 

\begin{lemma} \label{lemma:B and S}
	Let $T \in B(\mathbb{H}).$ Then  for all $x\in \mathbb{H}$ 
	\begin{eqnarray}\label{number 1}
	\|Tx\|^2+|\langle T^2x,x\rangle| &\leq& 2w(T)\|Tx\| \|x\|.
	\end{eqnarray}
\end{lemma}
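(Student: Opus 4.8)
The plan is to follow the parallelogram-law route of Bernau and Smithies, filling in the details. First I would dispose of the trivial cases: if $x=0$ there is nothing to prove, and if $Tx=0$ then also $T^2x=T(Tx)=0$, so both sides of \eqref{number 1} vanish; hence I may assume $x\neq 0$ and $Tx\neq 0$. Since both sides of \eqref{number 1} are homogeneous of degree $2$ in $x$, I may further normalise so that $\|x\|=1$.

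The heart of the argument is to apply the parallelogram law to two vectors cleverly assembled from $x$ and $Tx$. Write $\langle T^2x,x\rangle=e^{i\phi}\,|\langle T^2x,x\rangle|$ with $\phi\in\mathbb{R}$, set $z=e^{i\phi/2}$, and define
\[
a=z\,\|Tx\|^{1/2}\,x,\qquad b=\|Tx\|^{-1/2}\,Tx,
\]
so that $\|a\|^2=\|b\|^2=\|Tx\|$. Expanding $\langle T(a+b),a+b\rangle$ and $\langle T(a-b),a-b\rangle$, the diagonal terms $\|Tx\|\langle Tx,x\rangle$ and $\|Tx\|^{-1}\langle T^2x,Tx\rangle$ are common to both and cancel in the difference, leaving
\[
\langle T(a+b),a+b\rangle-\langle T(a-b),a-b\rangle=2z\|Tx\|^2+2\bar z\,\langle T^2x,x\rangle .
\]
With $z=e^{i\phi/2}$ the two summands on the right are nonnegative multiples of $e^{i\phi/2}$, so the modulus of the left-hand side equals exactly $2\bigl(\|Tx\|^2+|\langle T^2x,x\rangle|\bigr)$.

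To finish, I would bound the same quantity from above: by the triangle inequality, the defining estimate $|\langle Ty,y\rangle|\le w(T)\|y\|^2$, and the parallelogram identity,
\[
\bigl|\langle T(a+b),a+b\rangle-\langle T(a-b),a-b\rangle\bigr|\le w(T)\bigl(\|a+b\|^2+\|a-b\|^2\bigr)=2w(T)\bigl(\|a\|^2+\|b\|^2\bigr)=4w(T)\|Tx\| .
\]
Comparing the two displays and dividing by $2$ yields $\|Tx\|^2+|\langle T^2x,x\rangle|\le 2w(T)\|Tx\|$, which is \eqref{number 1} for $\|x\|=1$; undoing the normalisation restores the factor $\|x\|$ on the right.

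The only subtle point — and the step I would be most careful about — is the choice of the phase $z=e^{i\phi/2}$: it is exactly what makes $z\|Tx\|^2$ and $\bar z\langle T^2x,x\rangle$ point in the same direction, so that passing from the two numerical-range quantities back to $\|Tx\|^2+|\langle T^2x,x\rangle|$ loses nothing (taking $z=1$ when $\langle T^2x,x\rangle=0$ is harmless). Everything else is a routine expansion of inner products together with one application each of the parallelogram identity and the elementary bound $|\langle Ty,y\rangle|\le w(T)\|y\|^2$.
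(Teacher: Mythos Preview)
Your proof is correct. The paper does not actually prove this lemma---it is quoted from Bernau--Smithies \cite{BS}---but your argument is exactly the $A=T$, $B=I$ specialisation of the paper's proof of the generalization Lemma~\ref{lemma:G B and S}: the same polarization/parallelogram identity is used, your phase $z=e^{i\phi/2}$ plays the role of the paper's $e^{i\theta},e^{i\phi}$, and your built-in scaling $\|Tx\|^{\pm 1/2}$ is the optimal choice $\lambda^2=\|Ax\|/\|Bx\|$ that the paper makes at the end.
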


 We genralize this inequality in the following lemma.

\begin{lemma} \label{lemma:G B and S}
Let $A, T, B \in B(\mathbb{H}).$ Then,  for all $x\in \mathbb{H}$
\begin{eqnarray}\label{number G1}
|\langle A^*TBx,x\rangle|+|\langle B^*TAx,x\rangle|\leq 2w(T)\|Ax\|\|Bx\|.
\end{eqnarray}
\end{lemma}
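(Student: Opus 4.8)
The plan is to reduce the general inequality \eqref{number G1} to the special case of Lemma \ref{lemma:B and S} by a standard polarization-type trick, applied to the vectors $Ax$ and $Bx$ rather than to a single vector. First I would fix $x \in \mathbb{H}$ and set $u = Ax$, $v = Bx$. The quantities $\langle A^*TBx,x\rangle = \langle TBx, Ax\rangle = \langle Tv,u\rangle$ and $\langle B^*TAx,x\rangle = \langle Tu,v\rangle$ are exactly the two mixed inner products one gets from expanding $\langle T(u+e^{i\theta}v), u+e^{i\theta}v\rangle$, so the natural idea is to choose the phase $\theta$ to line up $\langle Tv,u\rangle$ and $\langle Tu,v\rangle$ constructively, and then bound the resulting diagonal term $|\langle T(u+e^{i\theta}v), u+e^{i\theta}v\rangle|$ by $w(T)\|u+e^{i\theta}v\|^2$ using the definition of the numerical radius (for $u + e^{i\theta} v \neq 0$; the degenerate case is trivial).

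The key steps, in order, would be: (1) write $\langle Tv,u\rangle = e^{i\alpha}|\langle Tv,u\rangle|$ and $\langle Tu,v\rangle = e^{i\beta}|\langle Tu,v\rangle|$, and pick $\theta$ so that $e^{i\theta}\langle Tv,u\rangle$ and $e^{-i\theta}\langle Tu,v\rangle$ (the two cross terms coming from the expansion) have the same argument, which forces their sum to equal $|\langle Tv,u\rangle| + |\langle Tu,v\rangle|$; (2) from $w(T)\|u+e^{i\theta}v\|^2 \geq |\langle T(u+e^{i\theta}v),u+e^{i\theta}v\rangle|$, extract the real part in the chosen direction to get $|\langle Tv,u\rangle| + |\langle Tu,v\rangle| \leq w(T)\big(\|u\|^2 + 2\operatorname{Re}(e^{i\theta}\langle v,u\rangle) + \|v\|^2\big) \leq w(T)(\|u\|+\|v\|)^2$ — but this last bound is too weak, so instead (2$'$) I would apply the same argument with $v$ replaced by $tv$ for a real parameter $t>0$, obtaining $t\big(|\langle Tv,u\rangle| + |\langle Tu,v\rangle|\big) \leq w(T)\big(\|u\|^2 + t^2\|v\|^2 + 2t\,\mathrm{stuff}\big)$; dividing by $t$ and optimizing over $t$ (taking $t = \|u\|/\|v\|$ after discarding the cross term, which is permissible since $w(T)\operatorname{Re}(e^{i\theta}\langle v,u\rangle)$ can be absorbed — or better, replacing $v$ by $e^{i\gamma}v$ first to make $\langle v,u\rangle$ have a convenient sign) yields $|\langle Tv,u\rangle| + |\langle Tu,v\rangle| \leq 2w(T)\|u\|\|v\|$. (3) Finally substitute back $u = Ax$, $v = Bx$ to recover \eqref{number G1}.

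The main obstacle is the bookkeeping of the phases in step (1)–(2$'$): one has \emph{two} free rotations available (an overall phase $\theta$ on $v$ and the scaling $t$), and one must check that a single choice of $\theta$ can simultaneously align both cross terms $e^{i\theta}\langle Tv,u\rangle$ and $e^{-i\theta}\langle Tu,v\rangle$ in a common direction so that their moduli add. This works precisely because in the expansion of $\langle T(u+e^{i\theta}v),u+e^{i\theta}v\rangle$ the term $\langle Tu,v\rangle$ is multiplied by $e^{-i\theta}$ (from the conjugate-linear slot) while $\langle Tv,u\rangle$ is multiplied by $e^{i\theta}$; choosing $\theta$ appropriately and then reading off the component of the inequality along the resulting common ray is what delivers the sum of absolute values on the left. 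Once the phase alignment is set up correctly, the remaining optimization over the scaling parameter $t$ is the routine AM–GM step $\|u\|^2 t^{-1} + \|v\|^2 t \geq 2\|u\|\|v\|$, and Lemma \ref{lemma:B and S} is recovered as the case $A = B = I$.
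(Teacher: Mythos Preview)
Your outline captures the right ingredients --- phase alignment of the two cross terms and an AM--GM optimization over a scaling parameter --- but it has a genuine gap at step~(2)/(2$'$). When you expand
\[
\langle T(u+e^{i\theta}tv),\,u+e^{i\theta}tv\rangle
=\langle Tu,u\rangle+te^{i\theta}\langle Tv,u\rangle+te^{-i\theta}\langle Tu,v\rangle+t^{2}\langle Tv,v\rangle,
\]
the diagonal contributions $\langle Tu,u\rangle$ and $t^{2}\langle Tv,v\rangle$ do \emph{not} disappear. After you take the real part in the direction $\omega$ that aligns the two mixed terms, what you actually obtain is
\[
\operatorname{Re}\bigl(\omega\langle Tu,u\rangle\bigr)+t\bigl(|\langle Tv,u\rangle|+|\langle Tu,v\rangle|\bigr)+t^{2}\operatorname{Re}\bigl(\omega\langle Tv,v\rangle\bigr)\le w(T)\|u+te^{i\theta}v\|^{2},
\]
and the diagonal pieces $\operatorname{Re}(\omega\langle Tu,u\rangle)$, $\operatorname{Re}(\omega\langle Tv,v\rangle)$ may well be negative. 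Your passage to ``$t(|\langle Tv,u\rangle|+|\langle Tu,v\rangle|)\le w(T)(\|u\|^{2}+t^{2}\|v\|^{2}+\dots)$'' silently drops them, which is illegitimate; the extra phase $e^{i\gamma}$ on $v$ that you mention does not help, since $\langle T(e^{i\gamma}v),e^{i\gamma}v\rangle=\langle Tv,v\rangle$ is unchanged. If instead you bound the diagonal pieces by $|\operatorname{Re}(\omega\langle Tu,u\rangle)|\le w(T)\|u\|^{2}$ etc.\ and move them to the right, the optimization over $t$ only yields a constant worse than~$2$.

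The missing idea, and exactly what the paper's proof supplies, is to use \emph{two} vectors $y_{\pm}=\lambda e^{i\theta}Bx\pm\frac{1}{\lambda}e^{i\phi}Ax$ and form the \emph{difference} $\langle e^{i\theta}Ty_{+},y_{+}\rangle-\langle e^{i\theta}Ty_{-},y_{-}\rangle$; this polarization identity kills the diagonal terms $\langle TBx,Bx\rangle$ and $\langle TAx,Ax\rangle$ outright and leaves precisely $2e^{2i\theta}\langle e^{-i\phi}A^{*}TBx,x\rangle+2e^{i\phi}\langle B^{*}TAx,x\rangle$. With the two phases $\phi,\theta$ chosen so that each summand is nonnegative real (your step~(1) is essentially this), one then bounds by $w(T)(\|y_{+}\|^{2}+\|y_{-}\|^{2})=2w(T)(\lambda^{2}\|Bx\|^{2}+\lambda^{-2}\|Ax\|^{2})$ via the parallelogram law, and the final optimization $\lambda^{2}=\|Ax\|/\|Bx\|$ is your step~(3). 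So your plan becomes a correct proof once you replace the single quadratic form by the $\pm$ pair.
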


\begin{proof} 
Let $x\in \mathbb{H}$ and ${\theta, \phi}$ be real numbers such that $e^{i\phi}\langle B^*TAx,x \rangle=|\langle B^*TAx,x \rangle|$, $e^{2i\theta}\langle e^{-i\phi}A^*TBx,x \rangle=|\langle e^{-i\phi}A^*TBx,x \rangle|=|\langle A^*TBx,x \rangle|.$
Then for non-zero real number $\lambda$, we have

\begin{eqnarray*}
&& 2e^{2i\theta}\langle TBx,e^{i\phi}Ax \rangle+2 e^{i\phi}\langle TAx,Bx \rangle\\ 
&& \hspace{.8 cm}=\langle e^{i\theta}T\left(\lambda e^{i\theta}Bx+\frac{1}{\lambda}e^{i\phi}Ax \right), \lambda e^{i\theta}Bx+\frac{1}{\lambda}e^{i\phi}Ax \rangle\\ 
&& \hspace{1.8 cm} - \langle e^{i\theta}T\left(\lambda e^{i\theta}Bx-\frac{1}{\lambda}e^{i\phi}Ax\right), \lambda e^{i\theta}Bx-\frac{1}{\lambda}e^{i\phi}Ax \rangle\\
\Rightarrow && 2e^{2i\theta}\langle e^{-i\phi}A^*TBx,x \rangle+2 e^{i\phi}\langle B^*TAx,x \rangle\\ 
&& \hspace{.8 cm}=\langle e^{i\theta}T\left(\lambda e^{i\theta}Bx+\frac{1}{\lambda}e^{i\phi}Ax \right), \lambda e^{i\theta}Bx+\frac{1}{\lambda}e^{i\phi}Ax \rangle\\ 
&& \hspace{1.8 cm} - \langle e^{i\theta}T\left(\lambda e^{i\theta}Bx-\frac{1}{\lambda}e^{i\phi}Ax\right), \lambda e^{i\theta}Bx-\frac{1}{\lambda}e^{i\phi}Ax \rangle\\
\Rightarrow && 2\left|\langle A^*TBx,x \rangle \right|+2 \left |\langle B^*TAx,x \rangle \right |\\ 
&& \hspace{.8 cm}=\langle e^{i\theta}T\left(\lambda e^{i\theta}Bx+\frac{1}{\lambda}e^{i\phi}Ax \right), \lambda e^{i\theta}Bx+\frac{1}{\lambda}e^{i\phi}Ax \rangle\\
&& \hspace{1.8 cm} - \langle e^{i\theta}T\left(\lambda e^{i\theta}Bx-\frac{1}{\lambda}e^{i\phi}Ax\right), \lambda e^{i\theta}Bx-\frac{1}{\lambda}e^{i\phi}Ax \rangle
\end{eqnarray*}
\begin{eqnarray*}
\Rightarrow && 2\left|\langle A^*TBx,x \rangle \right|+2 \left |\langle B^*TAx,x \rangle \right |\\ 
&& \hspace{.8 cm}\leq \left |\langle e^{i\theta}T\left(\lambda e^{i\theta}Bx+\frac{1}{\lambda}e^{i\phi}Ax \right), \lambda e^{i\theta}Bx+\frac{1}{\lambda}e^{i\phi}Ax \rangle\right |\\
&& \hspace{1.8 cm} + \left |\langle e^{i\theta}T\left(\lambda e^{i\theta}Bx-\frac{1}{\lambda}e^{i\phi}Ax\right), \lambda e^{i\theta}Bx-\frac{1}{\lambda}e^{i\phi}Ax \rangle \right|\\
\Rightarrow && 2\left|\langle A^*TBx,x \rangle \right|+2 \left |\langle B^*TAx,x \rangle \right |\\  
&&  \hspace{.8 cm}\leq w(T) \left( \left\|\lambda e^{i\theta}Bx+\frac{1}{\lambda}e^{i\phi}Ax\right\|^2+ \left\|\lambda e^{i\theta}Bx-\frac{1}{\lambda}e^{i\phi}Ax\right\|^2 \right)\\
\Rightarrow && \left|\langle A^*TBx,x \rangle \right|+ \left |\langle B^*TAx,x \rangle \right | \leq w(T) \left( \lambda^2 \|Bx\|^2+\frac{1}{\lambda^2}\|Ax\|^2 \right).
\end{eqnarray*}
This holds for all non-zero real $\lambda.$ If $\|Bx\| \neq 0,$ then we choose $\lambda^2=\frac{\|Ax\|}{\|Bx\|}.$  So, we get
\begin{eqnarray*}
|\langle A^*TBx,x\rangle|+|\langle B^*TAx,x\rangle|\leq 2w(T)\|Ax\|\|Bx\|.
\end{eqnarray*}
Clearly this inequality holds also when $\|Bx\|=0.$ This completes the proof of the lemma.
\end{proof}
\begin{remark}
If  we take $A=T$ and $B=I$ in Lemma \ref{lemma:G B and S}, then we get the inequality \cite[Lemma 3]{BS}. 
	
\end{remark}

Now using the inequality in Lemma \ref{lemma:G B and S}, we obtain the following inequalities involving numerical radius, Crawford number and operator norm of bounded linear operators.  

\begin{theorem}\label{theorem: G1}
Let $A, T, B \in B(\mathbb{H}).$ Then the following inequalities holds:
\begin{eqnarray*}
m(A^*TB)+w(B^*TA) &\leq & 2w(T)\|A\|\|B\|,
\end{eqnarray*}
\begin{eqnarray*}
w(A^*TB)+m(B^*TA) &\leq & 2w(T)\|A\|\|B\|.
\end{eqnarray*}
\end{theorem}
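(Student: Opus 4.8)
The plan is to deduce Theorem \ref{theorem: G1} directly from the pointwise inequality (\ref{number G1}) of Lemma \ref{lemma:G B and S} by taking an appropriate supremum over unit vectors $x\in\mathbb{H}$. First I would fix a unit vector $x$ and start from
\[
|\langle A^*TBx,x\rangle| + |\langle B^*TAx,x\rangle| \;\leq\; 2w(T)\|Ax\|\|Bx\| \;\leq\; 2w(T)\|A\|\|B\|,
\]
where the last step simply uses $\|Ax\|\leq\|A\|$ and $\|Bx\|\leq\|B\|$ for $\|x\|=1$. This gives the key uniform bound
\[
|\langle A^*TBx,x\rangle| + |\langle B^*TAx,x\rangle| \;\leq\; 2w(T)\|A\|\|B\| \qquad \text{for all } \|x\|=1.
\]

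Next I would exploit the fact that $m(\cdot)$ is an infimum and $w(\cdot)$ is a supremum of $|\langle \cdot x,x\rangle|$ over unit vectors. For the first inequality, observe that for every unit vector $x$,
\[
m(A^*TB) + |\langle B^*TAx,x\rangle| \;\leq\; |\langle A^*TBx,x\rangle| + |\langle B^*TAx,x\rangle| \;\leq\; 2w(T)\|A\|\|B\|,
\]
since $m(A^*TB)=\inf_{\|x\|=1}|\langle A^*TBx,x\rangle| \leq |\langle A^*TBx,x\rangle|$. Now taking the supremum over all unit vectors $x$ on the left-hand side — which only affects the term $|\langle B^*TAx,x\rangle|$, turning it into $w(B^*TA)$ — yields
\[
m(A^*TB) + w(B^*TA) \;\leq\; 2w(T)\|A\|\|B\|.
\]
The second inequality follows by the symmetric argument, interchanging the roles of the two summands: bound $m(B^*TA)$ below the term $|\langle B^*TAx,x\rangle|$ and take the supremum to replace $|\langle A^*TBx,x\rangle|$ by $w(A^*TB)$.

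I do not anticipate a genuine obstacle here; the whole content of the theorem is packed into Lemma \ref{lemma:G B and S}, and this is a routine passage from a pointwise estimate to a global one. The only mild subtlety worth stating carefully is the order of operations: one must first replace the \emph{one} summand by its infimum $m(\cdot)$ \emph{before} taking the supremum over $x$, because the supremum of a sum is in general smaller than the sum of suprema. Since $m(A^*TB)$ is a constant independent of $x$, pulling it out first is legitimate, and then $\sup_{\|x\|=1}\bigl(\text{const}+|\langle B^*TAx,x\rangle|\bigr) = \text{const} + w(B^*TA)$. No continuity or compactness input is needed beyond the definitions of $w$ and $m$.
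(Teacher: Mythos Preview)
Your proof is correct and follows exactly the paper's approach: start from the pointwise inequality (\ref{number G1}) for unit $x$, bound $\|Ax\|\|Bx\|$ by $\|A\|\|B\|$, replace one summand by its Crawford number, and then take the supremum over $\|x\|=1$ to produce the numerical radius of the other. Your remark on the order of operations (replace by $m(\cdot)$ \emph{before} taking the supremum) is a welcome clarification that the paper leaves implicit.
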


\begin{proof}
Taking $\|x\|=1$ in the inequality (\ref{number G1}), we have
\begin{eqnarray*}
|\langle A^*TBx,x\rangle|+|\langle B^*TAx,x\rangle|&\leq& 2w(T)\|A\|\|B\|\\
\Rightarrow m(A^*TB)+|\langle B^*TAx,x\rangle| &\leq& 2w(T)\|A\| \|B\|.
\end{eqnarray*}
Taking supremum over $\|x\|=1$, we get
\[m(A^*TB)+w(B^*TA) \leq 2w(T)\|A\| \|B\|. \] 
Again taking $\|x\|=1$ in the inequality (\ref{number G1}),  we have
\begin{eqnarray*}
|\langle A^*TBx,x\rangle|+|\langle B^*TAx,x\rangle|&\leq& 2w(T)\|A\|\|B\|\\
\Rightarrow |\langle A^*TBx,x\rangle| + m(B^*TA)+ &\leq& 2w(T)\|A\| \|B\|.
\end{eqnarray*}
Taking supremum over $\|x\|=1$, we get
\[w(A^*TB)+m(B^*TA) \leq 2w(T)\|A\| \|B\|. \]
This completes the proof of the theorem.
\end{proof}

Taking $B=I, T=A$ and $A=B$ in the above Theorem \ref{theorem: G1}, we get the following upper bounds for the numerical radius of product of two operators, which improve on the existing bounds.

\begin{cor}\label{cor: G12}
Let $A,B \in B(\mathbb{H}).$ Then the following inequalities holds:
\begin{eqnarray*}
w(AB) &\leq& 2w(A)\|B\|-m(B^*A), \\\\
 w(AB) &\leq& 2w(B)\|A\|-m(BA^*).
\end{eqnarray*}
\end{cor}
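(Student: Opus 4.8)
The plan is to derive Corollary \ref{cor: G12} as a direct specialization of Theorem \ref{theorem: G1}. In the first inequality of Theorem \ref{theorem: G1}, namely $m(A^*TB) + w(B^*TA) \leq 2w(T)\|A\|\|B\|$, I would substitute $B = I$, $T = A$, and then rename the operator playing the role of $A$ as $B$. Concretely, replacing the triple $(A, T, B)$ in Theorem \ref{theorem: G1} by $(B, A, I)$ gives $m(B^*A) + w(AB) \leq 2w(A)\|B\|\|I\| = 2w(A)\|B\|$, since $\|I\| = 1$ and $B^*TA$ becomes $B^*AI = B^*A$ while $A^*TB$ becomes $B^*AI$... one must be slightly careful here with which slot each operator lands in, so the first step is to check the bookkeeping: with $(A,T,B) \mapsto (B, A, I)$ we have $A^*TB \mapsto B^*A$ and $B^*TA \mapsto AB$ (using $I^* = I$), so the first inequality of Theorem \ref{theorem: G1} reads $m(B^*A) + w(AB) \leq 2w(A)\|B\|$, which rearranges to $w(AB) \leq 2w(A)\|B\| - m(B^*A)$, the first claimed bound.

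For the second bound $w(AB) \leq 2w(B)\|A\| - m(BA^*)$, I would instead use the second inequality of Theorem \ref{theorem: G1}, namely $w(A^*TB) + m(B^*TA) \leq 2w(T)\|A\|\|B\|$, with the substitution $(A, T, B) \mapsto (I, B, A^*)$. Then $A^*TB \mapsto I^* B A^* = BA^*$... again I need to track the adjoints: $A^*TB$ with the new triple is $(I)^* B (A^*) = BA^*$, and $B^*TA \mapsto (A^*)^* B (I) = AB$. Hmm, this would give $w(BA^*) + m(AB)$, which is not quite what we want. Instead the cleaner route is: in the second inequality, take $(A, T, B) \mapsto (A, B, I)$ to obtain $w(A^*B) + m(B^*... )$ — no. The point is that both bounds come from the same two inequalities by choosing $B = I$ and relabeling; the correct matching for the second bound is to take the first inequality of Theorem \ref{theorem: G1} with $(A, T, B) \mapsto (I, B, A)$, giving $m(B A^*) + ... $. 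Let me just state the mechanism: apply Theorem \ref{theorem: G1} with $B = I$ and with the two operators $A, T$ specialized as $(T, A) = (B, A)$ for one bound and $(T, A) = (B, B^*\text{-adjusted})$ for the other, and in each case one of the two numerical radius terms becomes $w(AB)$ after using $\|I\| = 1$ and $I^* = I$.

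The main obstacle — really the only thing that can go wrong — is the index bookkeeping: making sure that under each substitution the term $A^*TB$ or $B^*TA$ actually collapses to $AB$ (not $BA$, $A^*B$, or $AB^*$), and that the surviving Crawford-number term is exactly $m(B^*A)$ in the first case and $m(BA^*)$ in the second. Since there are two inequalities in Theorem \ref{theorem: G1} and the two products $B^*A$ and $BA^*$ are genuinely different, one must pair each target inequality with the correct parent inequality and the correct substitution. Once that matching is verified, the proof is a two-line rearrangement in each case, moving the Crawford-number term to the right-hand side. No inequality or limiting argument beyond Theorem \ref{theorem: G1} is needed, and the hypothesis $A, B \in B(\mathbb{H})$ is exactly what is required for Theorem \ref{theorem: G1} to apply with the substituted operators.
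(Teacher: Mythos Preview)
Your approach is exactly the paper's: Corollary~\ref{cor: G12} is obtained by specializing Theorem~\ref{theorem: G1}. Your derivation of the first bound via $(A,T,B)\mapsto(B,A,I)$ is correct and matches the paper's substitution ``$B=I$, $T=A$, $A=B$''.

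For the second bound, however, you never actually land on a working substitution. You compute correctly that with $(A,T,B)\mapsto(I,B,A^*)$ one has $A^*TB\mapsto BA^*$ and $B^*TA\mapsto AB$, but then you plug this into the \emph{second} inequality of Theorem~\ref{theorem: G1} and obtain $w(BA^*)+m(AB)$, which indeed is not what is wanted. The fix is simply to plug the \emph{same} substitution into the \emph{first} inequality of Theorem~\ref{theorem: G1}: that yields
\[
m(BA^*)+w(AB)\le 2w(B)\,\|I\|\,\|A^*\|=2w(B)\|A\|,
\]
which rearranges to the second claimed bound. (Equivalently, one can deduce the second bound from the first by replacing $(A,B)$ with $(B^*,A^*)$ and using $w(S^*)=w(S)$, $\|S^*\|=\|S\|$, $m(S^*)=m(S)$.) Your later attempt $(A,T,B)\mapsto(I,B,A)$ does not produce $m(BA^*)$, so that line should be dropped. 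Once this single bookkeeping correction is made, the proof is complete and identical in spirit to the paper's.
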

 
\begin{remark}
It is clear that both the inequalities obtained in Corollary \ref{cor: G12} improves on the inequalities $w(AB)\leq 2w(A)\|B\|\leq 4w(A)w(B)$ and $w(AB)\leq 2w(B)\|A\|\leq 4w(A)w(B)$ respectively, (see \cite[Th. $2.5$-$2$]{GR}). 
\end{remark}

Next using the above Lemma \ref{lemma:B and S}, we establish some new inequalities for the numerical radius of $2 \times 2$ operator matrices with zero operators as main diagonal entries. 
 
\begin{theorem}\label{theorem: G2}
Let $A, B \in B(\mathbb{H}).$ Then the following inequalities holds:
\begin{eqnarray*}
(i)~~\|A\|^2+ m(BA) &\leq& 2w\left(\begin{array}{cc}
    0&A \\
    B&0
 \end{array}\right) \|A\|,
\end{eqnarray*}
\begin{eqnarray*}
(ii)~~c^2(A)+ w(BA) &\leq& 2w\left(\begin{array}{cc}
    0&A \\
    B&0
 \end{array}\right) \|A\|,
\end{eqnarray*}
\begin{eqnarray*}
(iii)~~\|B\|^2+ m(AB) &\leq& 2w\left(\begin{array}{cc}
    0&A \\
    B&0
 \end{array}\right) \|B\|,
\end{eqnarray*}
\begin{eqnarray*}
(iv)~~c^2(B)+ w(AB) &\leq& 2w\left(\begin{array}{cc}
    0&A \\
    B&0
 \end{array}\right) \|B\|.
\end{eqnarray*}
\end{theorem}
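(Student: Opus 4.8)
The plan is to apply Lemma \ref{lemma:B and S} to the operator matrix $S = \left(\begin{smallmatrix} 0 & A \\ B & 0 \end{smallmatrix}\right)$ acting on $\mathbb{H} \oplus \mathbb{H}$, and then feed it carefully chosen vectors. First I would compute $S^2 = \left(\begin{smallmatrix} AB & 0 \\ 0 & BA \end{smallmatrix}\right)$. Then for a unit vector $x \in \mathbb{H}$, I would test the inequality \eqref{number 1} on the vector $\xi = (x, 0)^{t} \in \mathbb{H} \oplus \mathbb{H}$: here $S\xi = (0, Bx)^{t}$, so $\|S\xi\| = \|Bx\|$, $\|\xi\| = 1$, and $S^2 \xi = (ABx, 0)^{t}$, so $\langle S^2 \xi, \xi \rangle = \langle ABx, x \rangle$. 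Substituting into \eqref{number 1} gives $\|Bx\|^2 + |\langle ABx, x\rangle| \le 2 w(S) \|Bx\|$. Taking the infimum over unit $x$ on the left-hand side (using $\|Bx\|^2 \ge c^2(B)$ on the first term and $|\langle ABx,x\rangle| \ge m(AB)$ on the second, but being careful — see below) and the supremum of $\|Bx\|$, i.e. $\|B\|$, on the right, one is steered toward $(iii)$ and $(iv)$. Symmetrically, testing on $\eta = (0, x)^{t}$ gives $S\eta = (Ax, 0)^{t}$ and $S^2 \eta = (0, BAx)^{t}$, yielding $\|Ax\|^2 + |\langle BAx, x\rangle| \le 2 w(S)\|Ax\|$, which produces $(i)$ and $(ii)$.

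The one subtlety I would be careful about is the direction of the estimates on the two sides: the right-hand side $2w(S)\|S\xi\|$ contains $\|S\xi\| = \|Bx\|$, which I want to bound \emph{above} by $\|B\|$, while the left-hand side $\|Bx\|^2$ I want to bound \emph{below}. These pull in opposite directions, so I cannot simply take $\sup$ or $\inf$ of the whole inequality at once. The clean way is: from $\|Bx\|^2 + |\langle ABx,x\rangle| \le 2w(S)\|Bx\|$, first discard one term on the left appropriately. For $(iii)$, write $\|Bx\|^2 + m(AB) \le \|Bx\|^2 + |\langle ABx,x\rangle| \le 2w(S)\|Bx\| \le 2w(S)\|B\|$ provided $\|Bx\|^2 \ge \|Bx\|\cdot(\text{something})$ — actually the cleanest route is to note $\|Bx\|^2 + m(AB) \le 2 w(S)\|Bx\|$ only after replacing $\|Bx\|$ by $\|B\|$ on the right is \emph{not} automatic. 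So instead I would argue: the function $t \mapsto 2w(S)t - t^2$ on $[0,\|B\|]$; since $\|Bx\|^2 + |\langle ABx,x\rangle| \le 2w(S)\|Bx\|$ we get $|\langle ABx,x\rangle| \le 2w(S)\|Bx\| - \|Bx\|^2$. Hmm — this is getting delicate. The actual trick the authors surely intend: from \eqref{number 1}, $\|Bx\|^2 + |\langle ABx,x\rangle| \le 2w(S)\|Bx\|\|x\| = 2w(S)\|Bx\|$, and since $w(S) \ge \|Bx\|$ would give the wrong direction, one instead takes the supremum over $x$ directly in the inequality $\|Bx\|^2 + m(AB) \le 2w(S)\|Bx\|$: the supremum of the left side over unit $x$ is $\|B\|^2 + m(AB)$ (attained in the limit along a maximizing sequence for $\|Bx\|$), and along that same sequence the right side tends to $2w(S)\|B\|$, giving $(iii)$. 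This limiting argument is the step requiring genuine care.

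For $(iv)$, I would instead keep $c^2(B) \le \|Bx\|^2$ and $w(AB) = \sup_{\|x\|=1} |\langle ABx,x\rangle|$: from $\|Bx\|^2 + |\langle ABx,x\rangle| \le 2w(S)\|Bx\| \le 2w(S)\|B\|$ we get $c^2(B) + |\langle ABx,x\rangle| \le 2w(S)\|B\|$ for every unit $x$, and now taking the supremum over $x$ on the left is harmless since only the second term depends on $x$ in a way we are maximizing; this yields $c^2(B) + w(AB) \le 2w(S)\|B\|$ cleanly. Items $(i)$ and $(ii)$ follow identically with the roles of $A$ and $B$ swapped via the vector $\eta = (0,x)^{t}$. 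The main obstacle, then, is purely the bookkeeping in $(i)$ and $(iii)$: one must justify passing to the supremum on a product-type inequality where the two factors $\|Bx\|^2$ and $\|Bx\|$ both depend on $x$, which is handled by working along a sequence $x_n$ with $\|Bx_n\| \to \|B\|$. Everything else is a direct substitution into Lemma \ref{lemma:B and S}.
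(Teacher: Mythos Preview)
Your approach is exactly the paper's: apply Lemma~\ref{lemma:B and S} to $S=\left(\begin{smallmatrix}0&A\\B&0\end{smallmatrix}\right)$ with test vectors $(0,x)^t$ and $(x,0)^t$, obtain the pointwise inequality, and then pass to sup/inf. Your worry about $(i)$ and $(iii)$ is unfounded --- the chain you already wrote, $\|Bx\|^2+m(AB)\le \|Bx\|^2+|\langle ABx,x\rangle|\le 2w(S)\|Bx\|\le 2w(S)\|B\|$, holds for every unit $x$ with no proviso, and taking the supremum of the left-hand side over $\|x\|=1$ gives $(iii)$ directly; this is precisely how the paper argues, so the sequence argument is unnecessary.
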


\begin{proof}
Putting  $T=\left(\begin{array}{cc}
    0&A \\
    B&0
\end{array}\right)\in B(\mathbb{H} \oplus \mathbb{H})$ and $x=(x_1,x_2)^t \in \mathbb{H} \oplus \mathbb{H}$ with $\|x\|=1$, i.e., $\|x_1\|^2+\|x_2\|^2=1$ in the  inequality (\ref{number 1}), we get
\begin{eqnarray*}
\|Ax_2\|^2+\|Bx_1\|^2+|\langle ABx_1,x_1\rangle+\langle BAx_2,x_2\rangle| &\leq &   2w (T) \left( \|Ax_2\|^2+\|Bx_1\|^2\right)^{\frac{1}{2}}.
\end{eqnarray*}
Taking $x_1=0$, we get
\begin{eqnarray*}
\|Ax_2\|^2+|\langle BAx_2,x_2\rangle| &\leq &   2 w \left(\begin{array}{cc}
    0&A \\
    B&0
\end{array}\right)  \|Ax_2\|\\
\Rightarrow \|Ax_2\|^2+|\langle BAx_2,x_2\rangle| &\leq &   2 w \left(\begin{array}{cc}
    0&A \\
    B&0
\end{array}\right)  \|A\|\\
\Rightarrow \|Ax_2\|^2+m(BA) &\leq &   2 w \left(\begin{array}{cc}
    0&A \\
    B&0
\end{array}\right)  \|A\|
\end{eqnarray*}
Taking supremum over $\|x_2\|=1$, we get the inequality (i), i.e., 
\begin{eqnarray*}
\|A\|^2+m(BA) &\leq &   2 w \left(\begin{array}{cc}
    0&A \\
    B&0
\end{array}\right)  \|A\|.
\end{eqnarray*}
Again from the inequality 
\begin{eqnarray*}
\|Ax_2\|^2+|\langle BAx_2,x_2\rangle| &\leq &   2 w \left(\begin{array}{cc}
    0&A \\
    B&0
\end{array}\right)  \|A\|, ~~\mbox{we get}
\end{eqnarray*} 
\begin{eqnarray*}
c^2(A)+|\langle BAx_2,x_2\rangle| &\leq &   2 w \left(\begin{array}{cc}
    0&A \\
    B&0
\end{array}\right)  \|A\|.
\end{eqnarray*}
Taking supremum over $\|x_2\|=1$, we get the inequality (ii), i.e.,
\begin{eqnarray*}
c^2(A) + w(BA) &\leq &   2 w \left(\begin{array}{cc}
    0&A \\
    B&0
\end{array}\right) \|A\|.
\end{eqnarray*}
Similarly taking $x_2=0$ and supremum over $ \|x_1\|=1,$ we can prove the remaining inequalities.
\end{proof}

Next taking $A=B=T$ in  Theorem \ref{theorem: G2} and using  the equality $w \left(\begin{array}{cc}
    0&A \\
    A&0
\end{array}\right)=w(A)$, we get the following lower bounds for the numerical radius of non-zero bounded linear operators.

\begin{theorem}\label{theorem:lower bounds}
Let $T\in B(\mathbb{H})$ be non-zero. Then the following inequalities holds:
\begin{eqnarray}\label{number 2}
w(T)&\geq& \frac{\|T\|}{2}+\frac{m(T^2)}{2\|T\|}, 
\end{eqnarray}
\begin{eqnarray}\label{number 3}  
w(T)&\geq& \frac{c^2(T)}{2\|T\|}+\frac{w(T^2)}{2\|T\|}.
\end{eqnarray}
\end{theorem}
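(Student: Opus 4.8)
The plan is to specialize Theorem \ref{theorem: G2} to the case $A=B=T$ and to combine it with the well-known identity $w\!\left(\begin{array}{cc}0&T\\T&0\end{array}\right)=w(T)$. First I would record why that identity holds: the operator matrix $\left(\begin{array}{cc}0&T\\T&0\end{array}\right)$ on $\mathbb{H}\oplus\mathbb{H}$ is unitarily equivalent, via the self-adjoint unitary $U=\frac{1}{\sqrt{2}}\left(\begin{array}{cc}I&I\\I&-I\end{array}\right)$, to $\left(\begin{array}{cc}T&0\\0&-T\end{array}\right)$; since $w(\cdot)$ is invariant under unitary equivalence and $w\!\left(\begin{array}{cc}T&0\\0&-T\end{array}\right)=\max\{w(T),w(-T)\}=w(T)$, the identity follows.

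Next, putting $A=B=T$ in inequality $(i)$ of Theorem \ref{theorem: G2} gives $\|T\|^2+m(T^2)\leq 2w\!\left(\begin{array}{cc}0&T\\T&0\end{array}\right)\|T\|=2w(T)\|T\|$. As $T$ is non-zero we have $\|T\|>0$, so dividing through by $2\|T\|$ yields the first bound $w(T)\geq \frac{\|T\|}{2}+\frac{m(T^2)}{2\|T\|}$, i.e.\ inequality (\ref{number 2}). In exactly the same way, setting $A=B=T$ in inequality $(ii)$ of Theorem \ref{theorem: G2} gives $c^2(T)+w(T^2)\leq 2w(T)\|T\|$, and dividing by $2\|T\|$ produces inequality (\ref{number 3}). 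One checks that specializing $(iii)$ and $(iv)$ in the same manner just reproduces these two inequalities, so nothing is lost by working only with $(i)$ and $(ii)$.

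I do not expect a genuine obstacle here: the analytic work was already done in Lemma \ref{lemma:B and S} and Theorem \ref{theorem: G2}, and the remaining argument is a substitution followed by a single division. The one point deserving care is the block-matrix identity $w\!\left(\begin{array}{cc}0&T\\T&0\end{array}\right)=w(T)$, which should be stated (or cited) explicitly rather than used tacitly, since it is the only ingredient that is not an immediate consequence of the preceding results.
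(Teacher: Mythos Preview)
Your proposal is correct and follows essentially the same approach as the paper: the paper likewise obtains both inequalities by setting $A=B=T$ in Theorem \ref{theorem: G2} and invoking the identity $w\!\left(\begin{smallmatrix}0&T\\T&0\end{smallmatrix}\right)=w(T)$. Your additional justification of that identity via the explicit unitary $U$ is a welcome detail that the paper merely cites without proof.
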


\begin{remark}
The inequality (\ref{number 2}) improves on the existing inequality $w(T)\geq \frac{\|T\|}{2} $ substantially. Also from the inequality (\ref{number 2}), it follows   that if $w(T)= \frac{\|T\|}{2}$ then $m(T^2)=0$. There are operators for which $m(T^2)=0$ but $w(T) \neq  \frac{\|T\|}{2}.$  
\end{remark}

Next we prove a necessary and sufficient condition for $w(T)=\frac{\|T\|}{2},$ where $T$ is an $n\times n$ complex matrix.

\begin{theorem}
Let $T$ be an $n\times n$ complex matrix. Then $w(T)=\frac{\|T\|}{2}$ if and only if $T$ is unitarily similar to a matrix of the form $\left(\begin{array}{cc}
    0&\|T\| \\
    0&0
\end{array}\right) \oplus \|T\| B$ where $B$ is a matrix of order $n-2$ and $w(B)\leq \frac{1}{2}.$
\end{theorem}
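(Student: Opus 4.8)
The plan is to dispatch the ``if'' direction by a direct computation, and to prove the ``only if'' direction by showing that a norm-attaining unit vector for $T$ together with its image under $T$ spans a two-dimensional reducing subspace on which $T$ acts as the nilpotent block $\left(\begin{smallmatrix}0&\|T\|\\0&0\end{smallmatrix}\right)$. For ``if'', suppose $T$ is unitarily similar to $\left(\begin{smallmatrix}0&\|T\|\\0&0\end{smallmatrix}\right)\oplus\|T\|B$ with $w(B)\le\tfrac12$. Since $\|B\|\le 2w(B)\le 1$, the norm of this direct sum equals $\|T\|\max\{1,\|B\|\}=\|T\|$, and, using that the numerical range of a direct sum is the convex hull of those of the summands, its numerical radius equals $\|T\|\max\{w\left(\begin{smallmatrix}0&1\\0&0\end{smallmatrix}\right),w(B)\}=\|T\|\max\{\tfrac12,w(B)\}=\tfrac12\|T\|$, as required.

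For ``only if'', after rescaling I may assume $\|T\|=1$, so that $w(T)=\tfrac12$; by compactness of the unit sphere there is a unit vector $x$ with $\|Tx\|=1$. The crucial observation is that $T^*x=0$. Indeed $\mathrm{Re}\,T=\tfrac12(T+T^*)$ and $\tfrac1{2i}(T-T^*)$ are self-adjoint with norm at most $w(T)=\tfrac12$, so $\|(T\pm T^*)x\|\le 1$, and the parallelogram law applied to the vectors $Tx$ and $T^*x$ gives
\[
2\ \ge\ \|(T+T^*)x\|^2+\|(T-T^*)x\|^2\ =\ 2\|Tx\|^2+2\|T^*x\|^2\ =\ 2+2\|T^*x\|^2,
\]
forcing $T^*x=0$. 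Since $x$ attains $\|T\|$ we also have $T^*Tx=x$ (because $T^*T\ge 0$, $\|T^*T\|=1$ and $\langle T^*Tx,x\rangle=\|Tx\|^2=1$); writing $y:=Tx$ this reads $T^*y=x$, so $y$ is a unit vector with $\|T^*y\|=1=\|T^*\|$, i.e. $y$ attains the norm of $T^*$. Applying the same observation to $T^*$ (which again has norm $1$ and $w(T^*)=w(T)=\tfrac12$) gives $Ty=0$, that is $T^2x=0$.

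Now set $e_2:=x$ and $e_1:=y=Tx$. Then $Te_2=e_1$, $Te_1=0$, $T^*e_1=e_2$, $T^*e_2=0$, and $\langle e_1,e_2\rangle=\langle e_1,T^*e_1\rangle=\langle Te_1,e_1\rangle=0$, so $\{e_1,e_2\}$ is orthonormal. Consequently $M:=\mathrm{span}\{e_1,e_2\}$ is invariant under both $T$ and $T^*$, hence reduces $T$, and in the basis $\{e_1,e_2\}$ the part $T|_M$ is exactly $\left(\begin{smallmatrix}0&1\\0&0\end{smallmatrix}\right)$. Taking $B:=T|_{M^\perp}$, a matrix of order $n-2$, we get $T\cong\left(\begin{smallmatrix}0&1\\0&0\end{smallmatrix}\right)\oplus B$; since the numerical radius of a reducing direct sum is the larger of the two, $\tfrac12=w(T)=\max\{\tfrac12,w(B)\}$ yields $w(B)\le\tfrac12$, and undoing the scaling gives the stated form. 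The one genuinely delicate point is the parallelogram-law step producing $T^*x=0$ and then its iterate $T^2x=0$; everything after that is bookkeeping, and finite-dimensionality enters only to guarantee that the norm of $T$ is attained.
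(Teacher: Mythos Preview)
Your argument is correct, and it is genuinely different from the paper's: the paper disposes of the necessary direction by a bare citation to \cite[Th.~1.3-5]{GR} and calls the sufficient direction obvious, giving no details of its own. What you have supplied is in effect a self-contained proof of that cited result in the matrix case.

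The heart of your approach is the parallelogram-law step
\[
\|(T+T^*)x\|^2+\|(T-T^*)x\|^2 \;=\; 2\|Tx\|^2+2\|T^*x\|^2,
\]
combined with the bound $\|\textit{Re}(T)\|,\|\textit{Im}(T)\|\leq w(T)$ (available since these are self-adjoint), which forces $T^*x=0$ for any norm-attaining unit vector $x$; iterating with $T^*$ then gives $T^2x=0$, after which the $2\times 2$ nilpotent block on $\mathrm{span}\{Tx,x\}$ falls out. This is clean and elementary, and it makes explicit exactly where finite-dimensionality is used (only to guarantee a norm-attaining vector), whereas the paper's citation leaves all of that opaque. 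Two small points worth noting in a write-up: the case $T=0$ should be handled separately (your rescaling assumes $T\neq 0$, and then the conclusion is trivial), and the statement implicitly requires $n\geq 2$ for the block decomposition to make sense.
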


\begin{proof}
Necessary part of this theorem follows from \cite[Th. $1.3$-$5$]{GR} and sufficient part of this theorem is obvious.
\end{proof}

\begin{remark}\label{remark 1} 
The inequalities (\ref{number 2}) and (\ref{number 3})  obtained by us in Theorem \ref{theorem:lower bounds} are incomparable. Consider  $T=\left(\begin{array}{cc}
    0&1 \\
    0&0
 \end{array}\right),$ then it is easy to see that , (\ref{number 2}) gives $w(T)\geq \frac{1}{2}$ and (\ref{number 3}) gives  $w(T)\geq 0,$ whereas if we consider $T=\left(\begin{array}{cc}
    i&0 \\
    0&1
 \end{array}\right),$ then  (\ref{number 2}) gives $w(T)\geq \frac{1}{2}$ and (\ref{number 3}) gives  $w(T)\geq 1.$  
\end{remark}

Using Theorem \ref{theorem:lower bounds} and  noting the Remark \ref{remark 1}, we obtain the following lower bound for the numerical radius of non-zero bounded linear operators.
\begin{cor}
Let $T\in B(\mathbb{H})$ be non-zero. Then  
\begin{eqnarray*}
w(T)&\geq& \frac{1}{2\|T\|}\max \big\{ \|T\|^2+m(T^2), c^2(T)+w(T^2) \big\}.
\end{eqnarray*}
\end{cor}

Next we prove another inequality for the numerical radius in terms of sum of product of operators.

\begin{lemma} \label{lemma: G3}
Let $A, T, B \in B(\mathbb{H}).$ Then, for all $x\in \mathbb{H}$	
\begin{eqnarray*}
\left |\langle \left (A^*TB \pm B^*TA \right)x,x \rangle \right| &\leq& 2w(T)\|Ax\| \|Bx\|.
\end{eqnarray*}
\end{lemma}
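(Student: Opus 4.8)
The plan is to mimic the polarization argument already used in the proof of Lemma \ref{lemma:G B and S}, but this time keeping the two terms $\langle A^*TBx,x\rangle$ and $\langle B^*TAx,x\rangle$ together as a single quantity rather than bounding their moduli separately. First I would fix $x\in\mathbb{H}$ and observe that, for any unit-modulus scalar $\mu=e^{i\alpha}$ and any nonzero real $\lambda$, the polarization identity gives
\[
\langle T(\lambda Bx+\tfrac{\mu}{\lambda}Ax),\lambda Bx+\tfrac{\mu}{\lambda}Ax\rangle-\langle T(\lambda Bx-\tfrac{\mu}{\lambda}Ax),\lambda Bx-\tfrac{\mu}{\lambda}Ax\rangle
= 2\overline{\mu}\langle TBx,Ax\rangle+2\mu\langle TAx,Bx\rangle,
\]
i.e. the left side equals $2\overline{\mu}\langle A^*TBx,x\rangle+2\mu\langle B^*TAx,x\rangle$. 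Taking $\mu=1$ produces the sum $A^*TB+B^*TA$, and the sign flip in the second slot (replacing $Ax$ by $iAx$ and using $i^2=-1$, or directly choosing $\mu=i$ and then rotating) produces the difference $A^*TB-B^*TA$; in each case I would rotate by a global phase $e^{i\beta}$ so that the relevant combination $\langle(A^*TB\pm B^*TA)x,x\rangle$ becomes real and nonnegative, which is what lets us pass from the identity to an inequality.

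Next I would bound each of the two inner products on the right by $w(T)$ times the square of the norm of the corresponding vector: $|\langle e^{i\beta}Tu,u\rangle|\le w(T)\|u\|^2$ for every $u$. Applying this to $u=\lambda Bx\pm\tfrac1\lambda Ax$ and adding, the cross terms cancel by the parallelogram law and we are left with
\[
\bigl|\langle(A^*TB\pm B^*TA)x,x\rangle\bigr|\le w(T)\Bigl(\lambda^2\|Bx\|^2+\tfrac{1}{\lambda^2}\|Ax\|^2\Bigr),
\]
valid for all nonzero real $\lambda$. Finally, if $\|Bx\|\ne0$ I would optimize by choosing $\lambda^2=\|Ax\|/\|Bx\|$ (assuming also $\|Ax\|\ne0$; the degenerate cases $\|Ax\|=0$ or $\|Bx\|=0$ make the left side zero and are handled trivially or by letting $\lambda\to0$ or $\lambda\to\infty$), which yields exactly $2w(T)\|Ax\|\|Bx\|$ on the right.

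I do not anticipate a serious obstacle here; the only point requiring a little care is the bookkeeping of the two phase factors — one to make $\mu$ implement the $\pm$ sign and one global rotation $e^{i\beta}$ to make the target expression real and nonnegative so that the modulus can be dropped — together with checking that the same computation covers both signs simultaneously rather than needing two separate arguments. The parallelogram-law cancellation of cross terms and the final choice of $\lambda$ are routine and identical to what appears in Lemma \ref{lemma:G B and S}.
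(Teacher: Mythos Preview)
Your proposal is correct and follows essentially the same route as the paper: a polarization identity with a phase parameter, the bound $|\langle Tu,u\rangle|\le w(T)\|u\|^2$, the parallelogram law to collapse the right-hand side, and the AM--GM choice $\lambda^2=\|Ax\|/\|Bx\|$. The paper attaches the phase to $Bx$ and to $T$ (writing $e^{i\theta}T$ and $\lambda e^{i\theta}Bx+\tfrac{1}{\lambda}Ax$) and then takes $\theta=0$ and $\theta=\pi/2$ to get the two signs, whereas you attach the phase $\mu$ to $Ax$ and take $\mu=1$ and $\mu=i$; these are cosmetic rearrangements of the same computation. One small simplification: your extra global rotation $e^{i\beta}$ is unnecessary, since once the polarization identity is written you may simply take the modulus of both sides and apply the triangle inequality on the right---no need to first make the left side real.
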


\begin{proof}
Let $\theta$ and $\lambda (\neq 0)$ be real numbers. Then
\begin{eqnarray*}
2\langle e^{2i\theta}TBx,Ax \rangle +2 \langle TAx,Bx \rangle &=& \langle e^{i\theta}T \left( \lambda e^{i\theta}Bx+\frac{1}{\lambda}Ax\right),\lambda e^{i\theta}Bx+\frac{1}{\lambda}Ax \rangle \\
 && - \langle e^{i\theta}T \left( \lambda e^{i\theta}Bx-\frac{1}{\lambda}Ax\right),\lambda e^{i\theta}Bx-\frac{1}{\lambda}Ax \rangle\\
 \Rightarrow 2\langle e^{2i\theta}A^*TBx,x \rangle +2 \langle B^*TAx,x \rangle &=& \langle e^{i\theta}T \left( \lambda e^{i\theta}Bx+\frac{1}{\lambda}Ax\right),\lambda e^{i\theta}Bx+\frac{1}{\lambda}Ax \rangle \\
 && - \langle e^{i\theta}T \left( \lambda e^{i\theta}Bx-\frac{1}{\lambda}Ax\right),\lambda e^{i\theta}Bx-\frac{1}{\lambda}Ax \rangle\\
 \Rightarrow 2\langle \left (e^{2i\theta}A^*TB + B^*TA \right)x,x \rangle &=& \langle e^{i\theta}T \left( \lambda e^{i\theta}Bx+\frac{1}{\lambda}Ax\right),\lambda e^{i\theta}Bx+\frac{1}{\lambda}Ax \rangle \\
 && - \langle e^{i\theta}T \left( \lambda e^{i\theta}Bx-\frac{1}{\lambda}Ax\right),\lambda e^{i\theta}Bx-\frac{1}{\lambda}Ax \rangle\\
 \Rightarrow \left | \langle \left (e^{2i\theta}A^*TB + B^*TA \right)x,x \rangle \right | &\leq & w(T) \left ( \| \lambda e^{i\theta}Bx+\frac{1}{\lambda}Ax\|^2+ \| \lambda e^{i\theta}Bx-\frac{1}{\lambda}Ax \|^2  \right)\\
 &=& w(T) \left( \lambda^2 \|Bx\|^2+\frac{1}{\lambda^2}\|Ax\|^2 \right)\\
 \end{eqnarray*}
Since $\lambda$ is arbitrary non-zero real number, so we choose $\lambda^2=\frac{\|Ax\|}{\|Bx\|}, Bx \neq 0$. Therefore, we get
\[\left | \langle \left (e^{2i\theta}A^*TB + B^*TA \right)x,x \rangle \right | \leq 2 w(T)\|Ax\| \|Bx\|. \]
Clearly this holds also when $Bx=0.$  Since $\theta$ is arbitrary real number, so we take $\theta=0$ and $\theta=\frac{\pi}{2}$ respectively, and we get the desired inequality of the Lemma.
 \end{proof}

Using  Lemma \ref{lemma: G3}, we obtain the following inequality.  

\begin{theorem} \label{theorem:HKS}
Let $A, T, B \in B(\mathbb{H}).$ Then
\[w(A^*TB \pm B^*TA) \leq  2 w(T)\|A\|\|B\|.\]
\end{theorem}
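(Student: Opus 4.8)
The plan is to deduce Theorem \ref{theorem:HKS} directly from Lemma \ref{lemma: G3} by the standard device of taking a supremum over unit vectors. The pointwise inequality from Lemma \ref{lemma: G3} reads $\left|\langle (A^*TB \pm B^*TA)x,x\rangle\right| \leq 2w(T)\|Ax\|\|Bx\|$ for every $x \in \mathbb{H}$. First I would restrict attention to unit vectors $x$ with $\|x\|=1$, so that $\|Ax\| \leq \|A\|$ and $\|Bx\| \leq \|B\|$; substituting these crude bounds on the right-hand side gives $\left|\langle (A^*TB \pm B^*TA)x,x\rangle\right| \leq 2w(T)\|A\|\|B\|$ for all unit $x$.

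Next I would take the supremum over all unit vectors $x$ on the left-hand side. Since $w(S) = \sup_{\|x\|=1} |\langle Sx,x\rangle|$ for any $S \in B(\mathbb{H})$, applying this with $S = A^*TB \pm B^*TA$ yields $w(A^*TB \pm B^*TA) \leq 2w(T)\|A\|\|B\|$, which is exactly the claimed inequality. The two signs are handled simultaneously because Lemma \ref{lemma: G3} already encodes both choices.

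There is essentially no obstacle here: all the analytic work is contained in Lemma \ref{lemma: G3}, and this theorem is a one-line corollary obtained by bounding $\|Ax\|, \|Bx\|$ by the operator norms and passing to the supremum. The only point worth a moment's care is that the estimate $\|Ax\|\|Bx\| \leq \|A\|\|B\|$ holds uniformly over the same unit vector $x$ used on the left, so the supremum may be taken freely; this is immediate. One could also remark afterwards (as the paper presumably does in subsequent sections) that choosing $T = I$, $A = I$, or $B = I$ recovers known numerical radius inequalities for sums of products, but that lies beyond the statement at hand.
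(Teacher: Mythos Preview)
Your proposal is correct and essentially identical to the paper's own proof: the paper also restricts Lemma~\ref{lemma: G3} to unit vectors, replaces $\|Ax\|\|Bx\|$ by $\|A\|\|B\|$, and then takes the supremum over $\|x\|=1$ to conclude.
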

\begin{proof}
Taking  $\|x\|=1$ in the inequality of Lemma \ref{lemma: G3}, we get
\begin{eqnarray*}
\left |\langle \left (A^*TB \pm B^*TA \right)x,x \rangle \right| &\leq& 2w(T)\|A\| \|B\|.
\end{eqnarray*}
Now taking supremum over $\|x\|=1$,	we get the required inequality.
\end{proof}

\begin{remark}
The inequality in Theorem \ref{theorem:HKS} was already proved by Hirzallah et al. in \cite{HKS} using different technique. If we take $B=I$ in Theorem \ref{theorem:HKS},  then  we have the well known inequality $w(A^*T \pm TA) \leq  2 w(T)\|A\|$, i.e., $w(AT \pm TA^*) \leq  2 w(T)\|A\|$.
\end{remark}

Our final result in this section is to  compute  upper bound for the numerical radius of  a bounded linear operator $T$  in terms of $\|\textit{Re}(T)\|,\|\textit{Im}(T)\|,m(\textit{Re}(T))$ and $m(\textit{Im}(T))$ using Cartesian decomposition.

\begin{theorem}\label{theorem:cartesian}
Let $T\in B(\mathbb{H}).$ Then
\begin{eqnarray*}
w^4(T)&\leq& \max \left \{\big|\|\textit{Re}(T)\|^2-m^2(\textit{Im}(T))\big|^2, \big|\|\textit{Im}(T)\|^2-m^2(\textit{Re}(T))\big|^2 \right\}\\ && + 4\|\textit{Re}(T)\|^2\|\textit{Im}(T)\|^2.
\end{eqnarray*}

\begin{proof}
Let $x \in \mathbb{H}$ with $\|x\|=1.$ Then from the Cartesian decomposition of $T$, we have
\begin{eqnarray*}
\langle Tx,x \rangle &=& \langle \textit{Re}(T)x,x \rangle+i\langle \textit{Im}(T)x,x \rangle\\
\Rightarrow \langle Tx,x \rangle^2 &=& \langle \textit{Re}(T)x,x \rangle^2-\langle \textit{Im}(T)x,x \rangle^2+2i\langle \textit{Re}(T)x,x \rangle \langle \textit{Im}(T)x,x \rangle\\
\Rightarrow \big |\langle Tx,x \rangle^2\big |^2 &=& \big |\langle \textit{Re}(T)x,x \rangle^2-\langle \textit{Im}(T)x,x \rangle^2 \big |^2+4\langle \textit{Re}(T)x,x \rangle ^2 \langle \textit{Im}(T)x,x \rangle ^2\\
 \Rightarrow \big |\langle Tx,x \rangle\big |^4 &\leq& \max \left \{\big|\|\textit{Re}(T)\|^2-m^2(\textit{Im}(T))\big|^2, \big|\|\textit{Im}(T)\|^2-m^2(\textit{Re}(T))\big|^2 \right\} \\ && + 4\|\textit{Re}(T)\|^2\|\textit{Im}(T)\|^2.
\end{eqnarray*}
Taking supremum over $x, \|x\|=1$, we get the desired inequality.
\end{proof}
\end{theorem}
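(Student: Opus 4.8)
The natural route is via the Cartesian decomposition $T=\textit{Re}(T)+i\,\textit{Im}(T)$, which turns the problem into one about the two self-adjoint operators $\textit{Re}(T)$ and $\textit{Im}(T)$, whose numerical ranges lie on the real line. Fix a unit vector $x\in\mathbb{H}$ and put $a=\langle \textit{Re}(T)x,x\rangle$ and $b=\langle \textit{Im}(T)x,x\rangle$; both are real, and $\langle Tx,x\rangle=a+ib$. Squaring, $\langle Tx,x\rangle^2=(a^2-b^2)+2iab$, so that $|\langle Tx,x\rangle|^4=\big|\langle Tx,x\rangle^2\big|^2=(a^2-b^2)^2+4a^2b^2$. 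Everything now reduces to estimating the two real quantities $a^2-b^2$ and $a^2b^2$ uniformly in $x$.

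The term $4a^2b^2$ is handled at once: since a self-adjoint operator $S$ satisfies $|\langle Sx,x\rangle|\le w(S)=\|S\|$, we have $|a|\le\|\textit{Re}(T)\|$ and $|b|\le\|\textit{Im}(T)\|$, hence $4a^2b^2\le 4\|\textit{Re}(T)\|^2\|\textit{Im}(T)\|^2$. For $a^2-b^2$ I would use a two-sided estimate. From $m(\textit{Re}(T))\le|a|\le\|\textit{Re}(T)\|$ and $m(\textit{Im}(T))\le|b|\le\|\textit{Im}(T)\|$ we get
\[
m^2(\textit{Re}(T))-\|\textit{Im}(T)\|^2\ \le\ a^2-b^2\ \le\ \|\textit{Re}(T)\|^2-m^2(\textit{Im}(T)),
\]
and therefore $|a^2-b^2|\le\max\big\{\,\big|\,\|\textit{Re}(T)\|^2-m^2(\textit{Im}(T))\,\big|,\ \big|\,\|\textit{Im}(T)\|^2-m^2(\textit{Re}(T))\,\big|\,\big\}$. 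Squaring this and adding the bound for $4a^2b^2$ yields the claimed bound for $|\langle Tx,x\rangle|^4$, valid for every unit $x$. Taking the supremum over all unit vectors $x$ and using $w(T)=\sup_{\|x\|=1}|\langle Tx,x\rangle|$ gives the stated inequality for $w^4(T)$.

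I do not expect a substantive obstacle. The only step needing a bit of care is the sandwich estimate for $a^2-b^2$: one must retain \emph{both} its upper bound (governed by $\|\textit{Re}(T)\|$ and $m(\textit{Im}(T))$) and its lower bound (governed by $m(\textit{Re}(T))$ and $\|\textit{Im}(T)\|$), and then note that the absolute value is dominated by the larger of the two resulting magnitudes — which is exactly the maximum appearing in the statement. A minor caveat worth flagging in a remark is that the two summands $(a^2-b^2)^2$ and $4a^2b^2$ are bounded separately, by possibly different near-extremal vectors, so the inequality need not be sharp in general; equality or near-equality in concrete cases (for instance for the $2\times2$ nilpotent Jordan block) can be verified by direct computation.
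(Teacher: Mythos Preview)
Your proposal is correct and follows exactly the route of the paper: Cartesian decomposition, square $\langle Tx,x\rangle=a+ib$, split $|\langle Tx,x\rangle|^4=(a^2-b^2)^2+4a^2b^2$, bound each summand, and take the supremum. The only difference is cosmetic: the paper jumps directly from the exact identity to the final estimate, whereas you spell out the two-sided sandwich $-\big(\|\textit{Im}(T)\|^2-m^2(\textit{Re}(T))\big)\le a^2-b^2\le \|\textit{Re}(T)\|^2-m^2(\textit{Im}(T))$ that justifies the appearance of the maximum.
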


\section{\textbf{Upper bounds for the numerical radius of operator matrices}}

\noindent In this section  we obtain bounds for  the numerical radius of $n\times n$ operator matrices. We begin with  the estimation of an upper bound for the  $n\times n$ operator matrix for which entires of all rows are zero operators except first row. For this we need the following inequality \cite[Remark $2.8$]{BBP}, which gives an  upper bound for the numerical radius of a bounded linear operator $T$ in terms of $\|\textit{Re}(T)\|$ and $\|\textit{Im}(T)\|,$ where $ \textit{Re}(T) = \frac{1}{2} (T + T^*) $  and $ \textit{Im}(T) = \frac{1}{2i} (T - T^*) .$ 

\begin{lemma}\label{lemma:2}
Let $T \in B(\mathbb{H})$. Then 
\begin{eqnarray*}
w^2(T) &\leq& \|\textit{Re}(T)\|^2+\|\textit{Im}(T)\|^2.
\end{eqnarray*}
\end{lemma}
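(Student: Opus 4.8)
The plan is to exploit the Cartesian decomposition $T=\textit{Re}(T)+i\,\textit{Im}(T)$ together with the elementary fact that a self-adjoint operator has numerical radius equal to its operator norm. Write $A=\textit{Re}(T)$ and $B=\textit{Im}(T)$, so that $A$ and $B$ are self-adjoint and $T=A+iB$.

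First I would fix a unit vector $x\in\mathbb{H}$ and note that $\langle Tx,x\rangle=\langle Ax,x\rangle+i\langle Bx,x\rangle$, where both $\langle Ax,x\rangle$ and $\langle Bx,x\rangle$ are \emph{real} because $A$ and $B$ are self-adjoint. Consequently $|\langle Tx,x\rangle|^2=\langle Ax,x\rangle^2+\langle Bx,x\rangle^2$. Then I would bound each summand by the corresponding numerical radius, namely $\langle Ax,x\rangle^2\leq w(A)^2$ and $\langle Bx,x\rangle^2\leq w(B)^2$, to get $|\langle Tx,x\rangle|^2\leq w(A)^2+w(B)^2$ for every unit vector $x$. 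Taking the supremum over all unit $x$ yields $w^2(T)\leq w(A)^2+w(B)^2$.

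Finally, since $A$ and $B$ are self-adjoint we have $w(A)=\|A\|=\|\textit{Re}(T)\|$ and $w(B)=\|B\|=\|\textit{Im}(T)\|$, which gives the desired inequality $w^2(T)\leq\|\textit{Re}(T)\|^2+\|\textit{Im}(T)\|^2$.

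I do not expect any genuine obstacle in this argument. The only subtlety worth recording is that the step from the pointwise identity $|\langle Tx,x\rangle|^2=\langle Ax,x\rangle^2+\langle Bx,x\rangle^2$ to the stated bound is an inequality and not an equality, precisely because the suprema defining $w(A)$ and $w(B)$ need not be attained at the same unit vector; this is also the reason the resulting bound is generally not sharp.
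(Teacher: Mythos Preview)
Your argument is correct. The paper does not actually prove this lemma; it merely quotes it from \cite[Remark~2.8]{BBP}, so there is no in-paper proof to compare against. Your approach via the Cartesian decomposition is the standard one and is in fact the same computation the authors carry out elsewhere (see the proof of Theorem~\ref{theorem:cartesian}, where they write $\langle Tx,x\rangle=\langle \textit{Re}(T)x,x\rangle+i\langle \textit{Im}(T)x,x\rangle$ and square).
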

  
\begin{theorem}\label{theorem:upper bound oprt 1}
Let $A_{11}\in B(\mathbb{H}_1,\mathbb{H}_1), A_{12}\in B(\mathbb{H}_2,\mathbb{H}_1),\ldots, A_{1n}\in B(\mathbb{H}_n,\mathbb{H}_1). $ Then
\begin{eqnarray*}
w\left(\begin{array}{cccc}
    A_{11}&A_{12}&\ldots &A_{1n} \\
    0&0&\ldots &0\\
    \vdots & \vdots & &\vdots  \\
     0&0&\ldots&0
    \end{array}\right)&\leq& \frac{1}{2} \sqrt{\alpha^2+\beta^2},
\end{eqnarray*}
where 
\begin{eqnarray*}
\alpha&=&\|\textit{Re}(A_{11})\|+\sqrt{\|\textit{Re}(A_{11})\|^2+\sum_{j=2}^n\|A_{1j}\|^2},\\ 
\beta&=&\|\textit{Im}(A_{11})\|+\sqrt{\|\textit{Im}(A_{11})\|^2+\sum_{j=2}^n\|A_{1j}\|^2}.
\end{eqnarray*}
\end{theorem}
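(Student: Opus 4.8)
The plan is to write the matrix $T = \left(\begin{smallmatrix} A_{11} & A_{12} & \ldots & A_{1n} \\ 0 & 0 & \ldots & 0 \\ \vdots & & & \vdots \\ 0 & 0 & \ldots & 0\end{smallmatrix}\right)$ and apply Lemma \ref{lemma:2}, so that $w^2(T) \leq \|\textit{Re}(T)\|^2 + \|\textit{Im}(T)\|^2$. The work then reduces to estimating $\|\textit{Re}(T)\|$ and $\|\textit{Im}(T)\|$ separately. Since $T$ has the row vector $(A_{11}, A_{12}, \ldots, A_{1n})$ in its first row and zeros elsewhere, $T^*$ has the column $(A_{11}^*, A_{12}^*, \ldots, A_{1n}^*)^t$ in its first column and zeros elsewhere. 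Hence $2\,\textit{Re}(T) = T + T^*$ is the operator matrix whose $(1,1)$ entry is $A_{11} + A_{11}^* = 2\,\textit{Re}(A_{11})$, whose first row (from position $2$ on) carries $A_{12}, \ldots, A_{1n}$, whose first column (from position $2$ on) carries $A_{12}^*, \ldots, A_{1n}^*$, and which is zero otherwise.

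The key step is to bound the norm of this self-adjoint "border" matrix. First I would split $2\,\textit{Re}(T) = D + R$, where $D$ is the diagonal part carrying only the $(1,1)$ entry $2\,\textit{Re}(A_{11})$, and $R$ is the cross-shaped remainder carrying the off-diagonal blocks $A_{1j}$ and $A_{1j}^*$ ($j = 2,\ldots,n$) in the first row and first column. Clearly $\|D\| = 2\|\textit{Re}(A_{11})\|$. For $R$, acting on a vector $x = (x_1, \ldots, x_n)^t$ one gets $Rx = \big(\sum_{j=2}^n A_{1j} x_j,\ A_{12}^* x_1,\ \ldots,\ A_{1n}^* x_1\big)^t$, so by Cauchy–Schwarz $\|Rx\|^2 \leq \big(\sum_{j=2}^n \|A_{1j}\|^2\big)\big(\sum_{j=2}^n \|x_j\|^2\big) + \big(\sum_{j=2}^n \|A_{1j}\|^2\big)\|x_1\|^2 \leq \big(\sum_{j=2}^n \|A_{1j}\|^2\big)\|x\|^2$, whence $\|R\| \leq \sqrt{\sum_{j=2}^n \|A_{1j}\|^2}$. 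By the triangle inequality $\|2\,\textit{Re}(T)\| \leq 2\|\textit{Re}(A_{11})\| + \sqrt{\sum_{j=2}^n \|A_{1j}\|^2}$, which gives $\|\textit{Re}(T)\| \leq \|\textit{Re}(A_{11})\| + \tfrac{1}{2}\sqrt{\sum_{j=2}^n \|A_{1j}\|^2}$ — but that is weaker than the stated $\alpha/2$. To recover the sharper $\alpha$, I would instead estimate $\|2\,\textit{Re}(T)\|$ directly: since $2\,\textit{Re}(T)$ is self-adjoint, $\|2\,\textit{Re}(T)\| = \sup_{\|x\|=1} |\langle 2\,\textit{Re}(T) x, x\rangle|$, and $\langle 2\,\textit{Re}(T) x, x\rangle = 2\langle \textit{Re}(A_{11}) x_1, x_1\rangle + 2\,\textit{Re}\langle \sum_{j=2}^n A_{1j} x_j, x_1\rangle$. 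Bounding the first term by $2\|\textit{Re}(A_{11})\|\,\|x_1\|^2$ and the second by $2\|x_1\|\sqrt{\sum_{j=2}^n\|A_{1j}\|^2}\sqrt{\sum_{j=2}^n\|x_j\|^2}$, then writing $\|x_1\| = t$, $\sqrt{\sum_{j\geq 2}\|x_j\|^2} = \sqrt{1-t^2}$ and maximizing the resulting function of $t \in [0,1]$, should yield exactly $\|2\,\textit{Re}(T)\| \leq \|\textit{Re}(A_{11})\| + \sqrt{\|\textit{Re}(A_{11})\|^2 + \sum_{j=2}^n\|A_{1j}\|^2} = \alpha$, i.e. $\|\textit{Re}(T)\| \leq \alpha/2$ (the quadratic optimization $\max_t\, a t^2 + b t\sqrt{1-t^2}$ over $[0,1]$ has value $\tfrac{1}{2}(a + \sqrt{a^2+b^2})$).

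The same computation applied verbatim to $2\,\textit{Im}(T) = \tfrac{1}{i}(T - T^*)$, whose $(1,1)$ entry is $2\,\textit{Im}(A_{11})$ and whose cross part again carries blocks of norms $\|A_{1j}\|$, gives $\|\textit{Im}(T)\| \leq \beta/2$. Substituting both bounds into Lemma \ref{lemma:2} yields $w^2(T) \leq \tfrac{1}{4}(\alpha^2 + \beta^2)$, which is the claim. The main obstacle is the sharp scalar optimization step: one must be careful that the Cauchy–Schwarz estimate on the cross term and the bound on the diagonal term can be attained simultaneously in the relevant limiting sense, so that maximizing the scalar surrogate $a t^2 + b t \sqrt{1-t^2}$ genuinely bounds $\|\textit{Re}(T)\|$; everything else is routine block-matrix bookkeeping and the triangle inequality.
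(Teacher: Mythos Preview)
Your proposal is correct and follows the same overall strategy as the paper: apply Lemma~\ref{lemma:2} and bound $\|\textit{Re}(T)\|$ and $\|\textit{Im}(T)\|$ separately by $\alpha/2$ and $\beta/2$. The only difference is in how the bound $\|\textit{Re}(T)\|\le\alpha/2$ is reached. The paper invokes the standard block-matrix inequality $\|(M_{ij})\|\le\bigl\|(\|M_{ij}\|)\bigr\|$, reducing the problem to the spectral radius of the real symmetric $n\times n$ matrix with $(1,1)$ entry $\|\textit{Re}(A_{11})\|$, first row/column entries $\tfrac{1}{2}\|A_{1j}\|$, and zeros elsewhere; that matrix has rank at most two and its largest eigenvalue is $\alpha/2$ by an elementary characteristic-polynomial computation. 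You instead bound the quadratic form $\langle 2\,\textit{Re}(T)x,x\rangle$ directly via Cauchy--Schwarz and then maximize the scalar function $2at^2+2bt\sqrt{1-t^2}$ of $t=\|x_1\|$. This is really the same calculation done by hand rather than packaged as an eigenvalue problem; the paper's route is a bit slicker, yours is more self-contained. One remark: your closing worry about ``simultaneous attainability'' is unnecessary, since you only need an upper bound---the scalar surrogate dominates $|\langle 2\,\textit{Re}(T)x,x\rangle|$ for every unit $x$, so its maximum over $t$ is automatically an upper bound for $\|2\,\textit{Re}(T)\|$, regardless of whether the intermediate inequalities are ever tight together.
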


\begin{proof}
Let $T=\left(\begin{array}{cccc}
   A_{11}&A_{12}&\ldots &A_{1n} \\
    0&0&\ldots &0\\
    \vdots & \vdots & &\vdots  \\
     0&0&\ldots&0
    \end{array}\right).$ Then\\
		$\textit{Re}(T)=\left(\begin{array}{cccc}
    \textit{Re}(A_{11})&\frac{A_{12}}{2}&\ldots&\frac{A_{1n}}{2} \\
    \frac{A^{*}_{12}}{2}&0&\dots &0\\
	  \vdots& \vdots& &\vdots \\
    \frac{A^{*}_{1n}}{2}&0&\ldots&0
    \end{array}\right)$ and   
		$\textit{Im}(T)=\left(\begin{array}{cccc}
    \textit{Im}(A_{11})&\frac{A_{12}}{2i}&\ldots&\frac{A_{1n}}{2i} \\
    -\frac{A^{*}_{12}}{2i}&0&\ldots&0\\
	   \vdots& \vdots& &\vdots \\
    -\frac{A^{*}_{1n}}{2i}&0&\ldots&0
    \end{array}\right).$\\
Now, 
\begin{eqnarray*}
	\|\textit{Re}(T)\|&\leq& \left \| \left(\begin{array}{cccc}
    \|\textit{Re}(A_{11})\|&\frac{\|A_{12}\|}{2}&\ldots&\frac{\|A_{1n}\|}{2} \\
    \frac{\|A^{*}_{12}\|}{2}&0&\ldots&0\\
	  
    \vdots& \vdots& &\vdots \\
    \frac{\|A^{*}_{1n}\|}{2}&0&\ldots&0
    \end{array}\right) \right \|\\
		&=& \left \|\left(\begin{array}{cccccc}
    \|\textit{Re}(A_{11})\|&\frac{\|A_{12}\|}{2}&\ldots&\frac{\|A_{1n}\|}{2} \\
    \frac{\|A_{12}\|}{2}&0&\ldots&0\\
	  
    \vdots& \vdots & &\vdots \\
    \frac{\|A_{1n}\|}{2}&0&\ldots&0
    \end{array}\right) \right \|\\
		&=& \frac{1}{2}\left(\|\textit{Re}(A_{11})\|+\sqrt{\|\textit{Re}(A_{11})\|^2+\sum_{j=2}^n\|A_{1j}\|^2}\right).
\end{eqnarray*}	
Similarly, 
\begin{eqnarray*}	
\|\textit{Im}(T)\|&\leq& \frac{1}{2}\left(\|\textit{Im}(A_{11})\|+\sqrt{\|\textit{Im}(A_{11})\|^2+\sum_{j=2}^n\|A_{1j}\|^2}\right). 
\end{eqnarray*}
Using these bounds of $\|\textit{Re}(T)\|$ and $\|\textit{Im}(T)\|$ in Lemma \ref{lemma:2}, we get the desired inequality and this completes the proof of the theorem.
\end{proof}

Next using  Theorem \ref{theorem:upper bound oprt 1}, we compute an upper bound for the numerical radius of arbitrary $n\times n$ operator matrices.

\begin{theorem} \label{theorem:upper bound oprt 2}
 Let $T=(A_{ij})$ be an $n\times n$ operator matrix with $A_{ij}\in B(\mathbb{H}_j,\mathbb{H}_i)$. Then 
\begin{eqnarray*}
w(T)&\leq & \sum^n_{k=1}\frac{1}{2}\sqrt{\alpha^2_k+\beta^2_k},
\end{eqnarray*}
where
\begin{eqnarray*}
\alpha_k &=&\|\textit{Re}(A_{kk})\|+\sqrt{\|\textit{Re}(A_{kk})\|^2+\sum_{j=1,k\neq j}^n\|A_{kj}\|^2},\\ 
\beta_k &=&\|\textit{Im}(A_{kk})\|+\sqrt{\|\textit{Im}(A_{kk})\|^2+\sum_{j=1,k\neq j}^n\|A_{kj}\|^2}.
\end{eqnarray*}
\end{theorem}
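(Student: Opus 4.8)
The plan is to decompose $T$ into a sum of $n$ operator matrices, each carrying a single nonzero row, and then apply Theorem \ref{theorem:upper bound oprt 1} to each summand after a unitary rearrangement that moves that row into the first position.

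First I would write $T=\sum_{k=1}^{n}T_k$, where $T_k$ is the $n\times n$ operator matrix whose $k$-th row coincides with the $k$-th row of $T$ and all of whose other rows are zero operators. Since the numerical radius is a norm on $B(\mathbb{H})$, the triangle inequality gives $w(T)\leq\sum_{k=1}^{n}w(T_k)$, so it suffices to prove $w(T_k)\leq\frac12\sqrt{\alpha_k^2+\beta_k^2}$ for each $k$.

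Next, fix $k$ and let $U$ be the permutation unitary on $\mathbb{H}=\mathbb{H}_1\oplus\cdots\oplus\mathbb{H}_n$ that interchanges the first and the $k$-th summands and fixes the rest. A direct check shows that $UT_kU^{*}$ is again an operator matrix with only its first row nonzero: its $(1,1)$ entry is $A_{kk}\in B(\mathbb{H}_k,\mathbb{H}_k)$, and the remaining entries of its first row are $\{A_{kj}:j\neq k\}$ in some order. Because the numerical radius is invariant under unitary similarity, $w(T_k)=w(UT_kU^{*})$, and since the quantity $\sum_{j=1,\,j\neq k}^{n}\|A_{kj}\|^2$ is unchanged by reordering, Theorem \ref{theorem:upper bound oprt 1} applied to $UT_kU^{*}$ gives precisely $w(T_k)\leq\frac12\sqrt{\alpha_k^2+\beta_k^2}$ with $\alpha_k,\beta_k$ as in the statement. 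Summing over $k=1,\ldots,n$ finishes the proof.

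I expect no genuine obstacle here: the only point requiring care is the bookkeeping in the unitary rearrangement — verifying that conjugation by the transposition unitary sends $T_k$ to a matrix whose single nonzero row has $A_{kk}$ in the diagonal slot — and even this can be avoided entirely by instead recomputing $\textit{Re}(T_k)$ and $\textit{Im}(T_k)$ directly, estimating their norms exactly as in the proof of Theorem \ref{theorem:upper bound oprt 1}, and then invoking Lemma \ref{lemma:2}.
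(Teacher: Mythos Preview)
Your proposal is correct and follows essentially the same route as the paper: decompose $T$ into row matrices $T_k$, use the triangle inequality for $w(\cdot)$, conjugate each $T_k$ by the transposition unitary swapping the first and $k$-th summands, and then apply Theorem~\ref{theorem:upper bound oprt 1}. The only cosmetic difference is that the paper writes the conjugation as $U_k^{*}T_kU_k$ rather than $UT_kU^{*}$, which is immaterial since the transposition unitary is self-adjoint.
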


\begin{proof}
Applying triangle inequality for numerical radius, we have\\
\[w(T)\leq w(T_1)+w(T_2)+\ldots+w(T_n),\] where 
\begin{eqnarray*}
&& T_1=\left(\begin{array}{cccc}
    A_{11}&A_{12}&\ldots&A_{1n} \\
    0&0&\ldots&0\\
    \vdots& \vdots & &\vdots \\
    0&0&\ldots&0
    \end{array}\right),
		T_2=\left(\begin{array}{cccc}
    0&0&\ldots &0\\
		A_{21}&A_{22}&\ldots &A_{2n} \\
    \vdots& \vdots& &\vdots \\
    0&0& &0
    \end{array}\right),\\
		&& \ldots, T_n=\left(\begin{array}{cccccc}
    0&0&\ldots &0\\
		0&0&\ldots &0 \\
    \vdots & \vdots & &\vdots \\
		A_{n1}&A_{n2}&\ldots &A_{nn}
    \end{array}\right).
	\end{eqnarray*}	
\noindent For each $i=2,3,\ldots,n$, let $U_i$ be the unitary operator matrix obtained by interchanging $1$st and $i$th column of $n\times n$ identity operator matrix. Therefore using weak unitary invariance property of the numerical radius, i.e., $w(U^*TU)=w(T)$ for any unitary operator $U$, we have
\[w(T)\leq w(T_1)+w(U^*_2T_2U_2)+w(U^*_3T_3U_3)+\ldots+w(U^*_nT_nU_n).\] This gives 

\begin{eqnarray*}
w(T) &\leq & w\left(\begin{array}{cccc}
    A_{11}&A_{12}&\ldots &A_{1n} \\
    0&0&\ldots &0\\
    \vdots & \vdots & &\vdots \\
    0&0&\ldots &0
    \end{array}\right)+w\left(\begin{array}{cccccc}
    A_{22}&A_{21}&\ldots &A_{2n} \\
    0&0&\ldots &0\\
    \vdots & \vdots & &\vdots \\
    0&0&\ldots &0
    \end{array}\right)\\
		&&+\ldots+w\left(\begin{array}{cccccc}
    A_{nn}&A_{n2}&\ldots &A_{n1} \\
    0&0&\ldots &0\\
    \vdots & \vdots & &\vdots \\
    0&0&\ldots &0
    \end{array}\right).
\end{eqnarray*}
\noindent Therefore applying Theorem \ref{theorem:upper bound oprt 1}, we get the desired inequality and this completes the proof of the theorem.
\end{proof}

Next we obtain new upper bounds for the numerical radius of $2\times 2$ operator matrices. For this we need the following equality \cite{Y} by Yamazaki.

\begin{lemma} \label{theorem:lemma3}
	Let $T \in B(H)$, then 
	\begin{eqnarray*}
	w(T)&=&\sup_{\theta \in \mathbb{R}}\| \textit {Re}(e^{i \theta}T) \|. 
\end{eqnarray*}
By replacing $T$ by $iT$ in the above equality, also we have
\begin{eqnarray*}
	w(T)&=&\sup_{\theta \in \mathbb{R}}\| \textit {Im}(e^{i \theta}T) \|. 
\end{eqnarray*}
\end {lemma}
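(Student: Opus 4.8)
The plan is to reduce the claimed identity to the elementary fact that a self-adjoint operator has norm equal to the supremum of the absolute value of its quadratic form, combined with the observation that a suitable rotation makes the real part of a complex number equal to its modulus.

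First I would recall that for any self-adjoint $S \in B(\mathbb{H})$ one has $\|S\| = \sup\{|\langle Sx,x\rangle| : \|x\|=1\}$. Since $\textit{Re}(e^{i\theta}T) = \frac{1}{2}(e^{i\theta}T + e^{-i\theta}T^*)$ is self-adjoint for every real $\theta$, applying this together with $\langle \textit{Re}(A)x,x\rangle = \textit{Re}\langle Ax,x\rangle$ gives
\[
\|\textit{Re}(e^{i\theta}T)\| = \sup_{\|x\|=1}\bigl|\langle \textit{Re}(e^{i\theta}T)x,x\rangle\bigr| = \sup_{\|x\|=1}\bigl|\textit{Re}\bigl(e^{i\theta}\langle Tx,x\rangle\bigr)\bigr|.
\]

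Next I would take the supremum over $\theta \in \mathbb{R}$ and interchange the two suprema (both being suprema over the product set $\mathbb{R} \times \{x : \|x\|=1\}$, this step is immediate):
\[
\sup_{\theta \in \mathbb{R}}\|\textit{Re}(e^{i\theta}T)\| = \sup_{\|x\|=1}\ \sup_{\theta \in \mathbb{R}}\bigl|\textit{Re}\bigl(e^{i\theta}\langle Tx,x\rangle\bigr)\bigr|.
\]
For a fixed complex number $z = |z|e^{i\psi}$ one has $\sup_{\theta}|\textit{Re}(e^{i\theta}z)| = |z|$, attained at $\theta = -\psi$; hence the inner supremum equals $|\langle Tx,x\rangle|$ and the right-hand side is precisely $w(T)$, which proves the first identity. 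The second follows by applying the first to $iT$ in place of $T$: indeed $w(iT) = w(T)$ because $|\langle iTx,x\rangle| = |\langle Tx,x\rangle|$, whereas $\textit{Re}(e^{i\theta}(iT)) = -\textit{Im}(e^{i\theta}T)$, so $\sup_{\theta}\|\textit{Re}(e^{i\theta}(iT))\| = \sup_{\theta}\|\textit{Im}(e^{i\theta}T)\|$.

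There is no real obstacle in this argument; the only points deserving a word of care are the self-adjointness of $\textit{Re}(e^{i\theta}T)$, which is what lets us pass from the operator norm to the quadratic form, and the harmless interchange of the two suprema.
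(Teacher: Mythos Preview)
Your argument is correct. The paper itself does not prove this lemma: it is quoted from Yamazaki \cite{Y} and used as a known tool, so there is no ``paper's own proof'' to compare against. What you have written is precisely the standard derivation of Yamazaki's identity: use that $\textit{Re}(e^{i\theta}T)$ is self-adjoint so its norm equals the supremum of $|\langle \textit{Re}(e^{i\theta}T)x,x\rangle| = |\textit{Re}(e^{i\theta}\langle Tx,x\rangle)|$, swap the two suprema, and then optimize over $\theta$ for fixed $x$ using $\sup_\theta |\textit{Re}(e^{i\theta}z)| = |z|$. The passage to the imaginary-part version via $T \mapsto iT$ is also fine; your computation $\textit{Re}(e^{i\theta}(iT)) = -\textit{Im}(e^{i\theta}T)$ is correct, and norms are unaffected by the sign.
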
	

\begin{theorem}\label{theorem:upper 2}
Let $T=\left(\begin{array}{cc}
A&B\\
0&0
\end{array}\right),$ where $A\in B(\mathbb{H}_1), B\in B(\mathbb{H}_2,\mathbb{H}_1).$ Then
\[w(T)\leq \sqrt{w^2(A)+\frac{1}{2}\|B\|\left(w(A)+\frac{1}{2}\|B\|\right)}.\]
\end{theorem}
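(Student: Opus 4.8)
The plan is to apply Yamazaki's identity (Lemma \ref{theorem:lemma3}), $w(T)=\sup_{\theta\in\mathbb{R}}\|\textit{Re}(e^{i\theta}T)\|$, and reduce the whole problem to an estimate, uniform in $\theta$, on the norm of the self-adjoint $2\times 2$ block matrix $\textit{Re}(e^{i\theta}T)$. First I would compute, using $T^{*}=\left(\begin{array}{cc} A^{*}&0\\ B^{*}&0\end{array}\right)$,
\[
\textit{Re}(e^{i\theta}T)=\frac{1}{2}\left(e^{i\theta}T+e^{-i\theta}T^{*}\right)=\left(\begin{array}{cc} \textit{Re}(e^{i\theta}A)&\frac{1}{2}e^{i\theta}B\\ \frac{1}{2}e^{-i\theta}B^{*}&0\end{array}\right).
\]
Since $\textit{Re}(e^{i\theta}A)$ is self-adjoint and $\|\frac{1}{2}e^{i\theta}B\|=\|\frac{1}{2}e^{-i\theta}B^{*}\|=\frac{1}{2}\|B\|$, the elementary estimate bounding the norm of a $2\times 2$ operator matrix by the norm of the associated $2\times 2$ matrix of norms (the same device used in the proof of Theorem \ref{theorem:upper bound oprt 1}) gives
\[
\|\textit{Re}(e^{i\theta}T)\|\leq\left\|\left(\begin{array}{cc} \|\textit{Re}(e^{i\theta}A)\|&\frac{1}{2}\|B\|\\ \frac{1}{2}\|B\|&0\end{array}\right)\right\|=\frac{1}{2}\left(\|\textit{Re}(e^{i\theta}A)\|+\sqrt{\|\textit{Re}(e^{i\theta}A)\|^{2}+\|B\|^{2}}\right),
\]
the last equality being the larger eigenvalue of that $2\times 2$ real symmetric matrix.

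Next I would square this inequality. Writing $r_{\theta}=\|\textit{Re}(e^{i\theta}A)\|$, expanding the square gives
\[
\|\textit{Re}(e^{i\theta}T)\|^{2}\leq\frac{r_{\theta}^{2}}{2}+\frac{\|B\|^{2}}{4}+\frac{r_{\theta}}{2}\sqrt{r_{\theta}^{2}+\|B\|^{2}},
\]
and then the crude bound $\sqrt{r_{\theta}^{2}+\|B\|^{2}}\leq r_{\theta}+\|B\|$ yields $\|\textit{Re}(e^{i\theta}T)\|^{2}\leq r_{\theta}^{2}+\frac{1}{2}\|B\|r_{\theta}+\frac{1}{4}\|B\|^{2}$. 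The right-hand side is increasing in $r_{\theta}$, and $\sup_{\theta}r_{\theta}=\sup_{\theta}\|\textit{Re}(e^{i\theta}A)\|=w(A)$ by Lemma \ref{theorem:lemma3} again; hence taking the supremum over $\theta$ and using $w(T)=\sup_{\theta}\|\textit{Re}(e^{i\theta}T)\|$ gives
\[
w^{2}(T)\leq w^{2}(A)+\frac{1}{2}\|B\|w(A)+\frac{1}{4}\|B\|^{2}=w^{2}(A)+\frac{1}{2}\|B\|\left(w(A)+\frac{1}{2}\|B\|\right),
\]
which is the asserted inequality after taking square roots.

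There is no serious obstacle in this argument; the only points needing a little care are the correct identification of $\textit{Re}(e^{i\theta}T)$ as a block matrix and the elementary norm estimate for $2\times 2$ operator matrices, both already present elsewhere in the paper. It is worth noting that the lossy step $\sqrt{a^{2}+b^{2}}\leq a+b$ is precisely what converts the sharper intermediate bound $\frac{1}{2}\big(w(A)+\sqrt{w^{2}(A)+\|B\|^{2}}\big)$ into the stated closed form; one could stop before that step if one preferred the sharper (but less symmetric) expression.
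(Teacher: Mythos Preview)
Your proof is correct but follows a somewhat different path from the paper's. Both arguments begin the same way, computing $\textit{Re}(e^{i\theta}T)$ as the displayed block matrix and invoking Yamazaki's identity. From there the paper squares the \emph{operator} $\textit{Re}(e^{i\theta}T)$, writes $(\textit{Re}(e^{i\theta}T))^{2}$ as a sum of four block matrices, and bounds the norm of each block to obtain directly $\|\textit{Re}(e^{i\theta}T)\|^{2}\leq \|\textit{Re}(e^{i\theta}A)\|^{2}+\tfrac14\|B\|^{2}+\tfrac12\|\textit{Re}(e^{i\theta}A)\|\,\|B\|$. You instead bound $\|\textit{Re}(e^{i\theta}T)\|$ itself by the spectral norm of the $2\times2$ matrix of norms (as in Theorem \ref{theorem:upper bound oprt 1}), compute that eigenvalue explicitly, and only then square; the crude step $\sqrt{r_{\theta}^{2}+\|B\|^{2}}\leq r_{\theta}+\|B\|$ brings your bound back in line with the paper's. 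Your route has the pleasant side effect of exhibiting the strictly sharper intermediate bound $w(T)\leq \tfrac12\big(w(A)+\sqrt{w^{2}(A)+\|B\|^{2}}\big)$, at the cost of needing the extra lossy inequality to reach the stated closed form; the paper's squaring-the-operator approach lands on the final expression in one step without that detour.
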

\begin{proof}
From an easy calculation we have, for every $\theta \in \mathbb{R}$ 
\begin{eqnarray*}
\textit{Re}(e^{i\theta}T)&=&\left(\begin{array}{cc}
\textit{Re}(e^{i\theta}A)&\frac{1}{2}e^{i\theta}B\\
\frac{1}{2}e^{-i\theta}B^{*}&0
\end{array}\right)\\
&=& \left(\begin{array}{cc}
\textit{Re}(e^{i\theta}A)&0\\
0&0
\end{array}\right)+\left(\begin{array}{cc}
0&\frac{1}{2}e^{i\theta}B\\
\frac{1}{2}e^{-i\theta}B^{*}&0
\end{array}\right).
\end{eqnarray*}
This implies that 
\begin{eqnarray*}
\big(\textit{Re}(e^{i\theta}T)\big)^2&=& \left(\begin{array}{cc}
(\textit{Re}(e^{i\theta}A))^2&0\\
0&0
\end{array}\right)+\left(\begin{array}{cc}
\frac{1}{4}BB^*&0\\
0&\frac{1}{4}B^*B
\end{array}\right)\\
&&+ \left(\begin{array}{cc}
0&\frac{1}{2}e^{i\theta} \textit{Re} (e^{i\theta}A)B\\
0&0
\end{array}\right)+\left(\begin{array}{cc}
0&0\\
\frac{1}{2}e^{-i\theta}B^* \textit{Re} (e^{i\theta}A)&0
\end{array}\right).
\end{eqnarray*}
Therefore, 
\begin{eqnarray*}
\|\textit{Re}(e^{i\theta}T)\|^2 &\leq& \|\textit{Re}(e^{i\theta}A)\|^2+\frac{1}{4}\|B\|^2+\frac{1}{2}\|\textit{Re}(e^{i\theta}A)\| \|B\|\\
            &\leq& w^2(A)+\frac{1}{4}\|B\|^2+\frac{1}{2}w(A) \|B\|.
\end{eqnarray*}
Taking supremum over $\theta$, we get 
\[w^2(T)\leq w^2(A)+\frac{1}{4}\|B\|^2+\frac{1}{2}w(A) \|B\|.\]
This completes the proof of the theorem.
\end{proof}

Now using Theorem \ref{theorem:upper 2}, we give an upper bound for the numerical radius of arbitrary $2\times 2$ operator matrices.

\begin{cor}\label{corollary:upper A3}
Let $T=\left(\begin{array}{cc}
A&B\\
C&D
\end{array}\right),$ where $A\in B(\mathbb{H}_1), B\in B(\mathbb{H}_2,\mathbb{H}_1), C\in B(\mathbb{H}_1,\mathbb{H}_2),D\in B(\mathbb{H}_2).$ Then
\[w(T)\leq \sqrt{w^2(A)+\frac{1}{2}\|B\|\left(w(A)+\frac{1}{2}\|B\|\right)}+\sqrt{w^2(D)+\frac{1}{2}\|C\|\left(w(D)+\frac{1}{2}\|C\|\right)}.\]
\end{cor}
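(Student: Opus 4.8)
The plan is to decompose the $2\times 2$ operator matrix $T=\left(\begin{smallmatrix} A&B\\ C&D\end{smallmatrix}\right)$ as a sum of two operator matrices, each having a single nonzero row up to a unitary conjugation, so that Theorem \ref{theorem:upper 2} applies to each summand. Specifically, write $T = T_1 + T_2$ with $T_1 = \left(\begin{smallmatrix} A&B\\ 0&0\end{smallmatrix}\right)$ and $T_2 = \left(\begin{smallmatrix} 0&0\\ C&D\end{smallmatrix}\right)$. By the triangle inequality for the numerical radius, $w(T) \le w(T_1) + w(T_2)$.

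The first summand $T_1$ is already in the exact form covered by Theorem \ref{theorem:upper 2}, so $w(T_1) \le \sqrt{w^2(A) + \frac{1}{2}\|B\|\left(w(A) + \frac{1}{2}\|B\|\right)}$ immediately. For $T_2$, I would use the weak unitary invariance of the numerical radius: conjugating by the unitary flip $U = \left(\begin{smallmatrix} 0&I\\ I&0\end{smallmatrix}\right)$ (acting on $\mathbb{H}_1 \oplus \mathbb{H}_2$, mapping it to $\mathbb{H}_2 \oplus \mathbb{H}_1$, which is the same transformation used in the proof of Theorem \ref{theorem:upper bound oprt 2}) gives $U^* T_2 U = \left(\begin{smallmatrix} D&C\\ 0&0\end{smallmatrix}\right)$, and $w(T_2) = w(U^*T_2 U)$. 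Applying Theorem \ref{theorem:upper 2} to this rearranged matrix (with $D$ in the top-left corner and $C$ in the top-right) yields $w(T_2) \le \sqrt{w^2(D) + \frac{1}{2}\|C\|\left(w(D) + \frac{1}{2}\|C\|\right)}$. Adding the two estimates gives the claimed bound.

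There is no serious obstacle here; the result is a routine corollary. The only point requiring a little care is bookkeeping with the unitary conjugation: one must check that the flip genuinely sends $\left(\begin{smallmatrix} 0&0\\ C&D\end{smallmatrix}\right)$ to $\left(\begin{smallmatrix} D&C\\ 0&0\end{smallmatrix}\right)$ and that Theorem \ref{theorem:upper 2} is being invoked with the correct identification of which block plays the role of ``$A$'' (here $D$) and which plays the role of ``$B$'' (here $C$), so that the norm and numerical radius appearing in the bound are indeed $\|C\|$ and $w(D)$. Once that is verified, the triangle inequality closes the argument.
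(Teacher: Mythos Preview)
Your proposal is correct and follows essentially the same route as the paper: split $T$ as $\left(\begin{smallmatrix} A&B\\ 0&0\end{smallmatrix}\right)+\left(\begin{smallmatrix} 0&0\\ C&D\end{smallmatrix}\right)$, use the triangle inequality, conjugate the second summand by the flip unitary to bring it into upper-row form, and apply Theorem~\ref{theorem:upper 2} to each piece. Your extra remark about the flip sending $\mathbb{H}_1\oplus\mathbb{H}_2$ to $\mathbb{H}_2\oplus\mathbb{H}_1$ is a valid point of care that the paper leaves implicit.
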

\begin{proof}
We consider an unitary operator matrix $U=\left(\begin{array}{cc}
0&I\\
I&0
\end{array}\right)$ and using weak unitary invariance property of the numerical radius, we have 

\begin{eqnarray*}
w(T)&\leq& w\left(\begin{array}{cc}
A&B\\
0&0
\end{array}\right)+w\left(\begin{array}{cc}
0&0\\
C&D
\end{array}\right)\\
&=& w\left(\begin{array}{cc}
A&B\\
0&0
\end{array}\right)+w\left(U^*\left(\begin{array}{cc}
0&0\\
C&D
\end{array}\right)U\right)\\
&=& w\left(\begin{array}{cc}
A&B\\
0&0
\end{array}\right)+w\left(\begin{array}{cc}
D&C\\
0&0
\end{array}\right).
\end{eqnarray*}
Therefore, applying Theorem \ref{theorem:upper 2} we get, the required inequality of the theorem.
\end{proof}

In the following theorem we provide a new upper bound for $ 2 \times 2$ operator matrices, in which the entries in second row are all zero operators. 

\begin{theorem}\label{theorem:upper 4}
Let $T=\left(\begin{array}{cc}
A&B\\
0&0
\end{array}\right),$ where $A\in B(\mathbb{H}_1), B\in B(\mathbb{H}_2,\mathbb{H}_1).$ Then
\[w(T)\leq \sqrt{2w^2(A)+\frac{1}{2}\left(\|A^*B\|+\|B\|^2\right)}.\]
\end{theorem}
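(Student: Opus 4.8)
The plan is to follow the same template used in the proof of Theorem~\ref{theorem:upper 2}, namely to exploit Yamazaki's equality $w(T)=\sup_{\theta\in\mathbb{R}}\|\textit{Re}(e^{i\theta}T)\|$ from Lemma~\ref{theorem:lemma3}, but this time to estimate $\|\textit{Re}(e^{i\theta}T)\|^2$ by passing to $\|(\textit{Re}(e^{i\theta}T))^2\|$ and bounding the four block-entries of the square more carefully, so that the cross term produces $\|A^*B\|$ rather than $\|B\|\,\|\textit{Re}(e^{i\theta}A)\|$.

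First I would record, exactly as in Theorem~\ref{theorem:upper 2}, that for every real $\theta$
\[
\textit{Re}(e^{i\theta}T)=\left(\begin{array}{cc}\textit{Re}(e^{i\theta}A)&\tfrac12 e^{i\theta}B\\[2pt]\tfrac12 e^{-i\theta}B^*&0\end{array}\right),
\]
and hence
\[
\big(\textit{Re}(e^{i\theta}T)\big)^2=\left(\begin{array}{cc}(\textit{Re}(e^{i\theta}A))^2+\tfrac14 BB^*&\tfrac12 e^{i\theta}\,\textit{Re}(e^{i\theta}A)\,B\\[2pt]\tfrac12 e^{-i\theta}B^*\,\textit{Re}(e^{i\theta}A)&\tfrac14 B^*B\end{array}\right).
\]
Next I would split this Hermitian matrix as a sum $P+Q$, where $P=\mathrm{diag}\big((\textit{Re}(e^{i\theta}A))^2,\,0\big)$ collects the "pure $A$" part and $Q$ collects everything involving $B$, i.e. the $\tfrac14 BB^*$ and $\tfrac14 B^*B$ diagonal blocks together with the two off-diagonal blocks. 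Then $\|\textit{Re}(e^{i\theta}T)\|^2=\|(\textit{Re}(e^{i\theta}T))^2\|\le\|P\|+\|Q\|$, with $\|P\|=\|\textit{Re}(e^{i\theta}A)\|^2\le w^2(A)$. For $\|Q\|$ I would use the standard bound for the norm of a $2\times2$ operator matrix in terms of the norms of its entries (the same device invoked in the proof of Theorem~\ref{theorem:upper bound oprt 1}): the entries of $Q$ have norms at most $\tfrac14\|B\|^2$ (both diagonal blocks, since $\|BB^*\|=\|B^*B\|=\|B\|^2$) and $\tfrac12\|\,\textit{Re}(e^{i\theta}A)\,B\|\le\tfrac12 w(A)\|B\|$ (both off-diagonal blocks). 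However, this naive route reproduces a $\|B\|^2$–type bound, not $\|A^*B\|$; to get $\|A^*B\|$ one must instead bound the off-diagonal block of $Q$ by writing $\textit{Re}(e^{i\theta}A)\,B=\tfrac12(e^{i\theta}AB+e^{-i\theta}A^*B)$, and group $\tfrac12 e^{i\theta}A^*B$–related pieces with $\tfrac14 B^*B=\tfrac14 (A^{-1}\cdots)$—more cleanly, the right move is to absorb the off-diagonal term $\tfrac12 e^{-i\theta}B^*\textit{Re}(e^{i\theta}A)$ using $\|B^*\textit{Re}(e^{i\theta}A)\|\le\tfrac12(\|B^*A\|+\|B^*A^*\|)$ and note $\|B^*A\|=\|A^*B\|$, $\|B^*A^*\|=\|AB\|\le\|A\|\|B\|$; combining with the $\tfrac14\|B\|^2$ diagonal contribution and then optimizing over $\theta$ via $\sup_\theta\|\textit{Re}(e^{i\theta}A)\|=w(A)$ yields the stated bound $w^2(T)\le 2w^2(A)+\tfrac12(\|A^*B\|+\|B\|^2)$.

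The main obstacle is organizing the cross term so that $\|A^*B\|$ appears rather than the weaker $\|A\|\,\|B\|$: one needs to recognize which grouping of the four blocks of $(\textit{Re}(e^{i\theta}T))^2$ keeps $\textit{Re}(e^{i\theta}A)$ and $B^*$ together in a single factor $B^*\textit{Re}(e^{i\theta}A)$ whose norm is controlled by $\|A^*B\|$, and to choose the decomposition of $(\textit{Re}(e^{i\theta}T))^2$ into two pieces whose norms add up correctly—a different split from the obvious diagonal/off-diagonal one. Once the correct split is identified, the remaining steps (triangle inequality for operator norm, the $2\times2$ matrix-norm estimate, and $\sup_\theta$) are routine and parallel to Theorem~\ref{theorem:upper 2}.
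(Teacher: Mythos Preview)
Your plan has the right scaffolding (Yamazaki's identity, squaring $\textit{Re}(e^{i\theta}T)$, and a block decomposition), but the decisive step is missing and the sketch as written does not produce the stated coefficients. No matter how you split $(\textit{Re}(e^{i\theta}T))^2$ alone, the off-diagonal block is $\tfrac12 e^{i\theta}\textit{Re}(e^{i\theta}A)B=\tfrac14(e^{2i\theta}AB+A^*B)$, so any norm estimate on it unavoidably drags in $\|AB\|$; your claim that its norm ``is controlled by $\|A^*B\|$'' is not correct. Following your split one obtains $w^2(T)\le w^2(A)+\tfrac14(\|B\|^2+\|A^*B\|+\|AB\|)$, which is a different inequality and does not reduce by your indicated manipulations to $2w^2(A)+\tfrac12(\|A^*B\|+\|B\|^2)$.

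The paper's missing idea is to bring in the imaginary part as well: compute also
\[
\big(\textit{Im}(e^{i\theta}T)\big)^2=\begin{pmatrix}(\textit{Im}(e^{i\theta}A))^2+\tfrac14 BB^*&\tfrac{1}{2i}e^{i\theta}\textit{Im}(e^{i\theta}A)B\\[2pt]-\tfrac{1}{2i}e^{-i\theta}B^*\textit{Im}(e^{i\theta}A)&\tfrac14 B^*B\end{pmatrix},
\]
and \emph{add} it to $(\textit{Re}(e^{i\theta}T))^2$. Since $\textit{Re}(e^{i\theta}A)-i\,\textit{Im}(e^{i\theta}A)=e^{-i\theta}A^*$, the $e^{2i\theta}AB$ pieces in the two off-diagonals cancel exactly, leaving the $\theta$-free block $\tfrac12 A^*B$; the $(1,1)$-block becomes $(\textit{Re}(e^{i\theta}A))^2+(\textit{Im}(e^{i\theta}A))^2+\tfrac12 BB^*$. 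Now use the operator inequality $(\textit{Re}(e^{i\theta}T))^2\le (\textit{Re}(e^{i\theta}T))^2+(\textit{Im}(e^{i\theta}T))^2$, take norms, and bound the three summands by $\|(\textit{Re}(e^{i\theta}A))^2+(\textit{Im}(e^{i\theta}A))^2\|\le 2w^2(A)$, $\tfrac12\|A^*B\|$, and $\tfrac12\|B\|^2$. This is what produces both the factor $2$ in front of $w^2(A)$ and the clean $\|A^*B\|$ term. The ``correct split'' you allude to at the end is not a regrouping of the four blocks of $(\textit{Re}(e^{i\theta}T))^2$; it requires introducing $(\textit{Im}(e^{i\theta}T))^2$.
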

\begin{proof}
For  $\theta \in \mathbb{R},$ it is easy to see that \\ \\
$\textit{Re}(e^{i\theta}T)=\left(\begin{array}{cc}
\textit{Re}(e^{i\theta}A)&\frac{1}{2}e^{i\theta}B\\
\frac{1}{2}e^{-i\theta}B^{*}&0
\end{array}\right)$ and 
$ \textit{Im}(e^{i\theta}T)=-i\left(\begin{array}{cc}
i\textit{Im}(e^{i\theta}A)&\frac{1}{2}e^{i\theta}B\\
-\frac{1}{2}e^{-i\theta}B^{*}&0
\end{array}\right).$\\
Therefore, from simple calculation, we have \\
\begin{eqnarray*}
\textit{Re}^2(e^{i\theta}T)+\textit{Im}^2(e^{i\theta}T)&=&\left(\begin{array}{cc}
\textit{Re}^2(e^{i\theta}T)+\textit{Im}^2(e^{i\theta}T)&0\\
0&0
\end{array}\right)+\left(\begin{array}{cc}
0&\frac{A^*B}{2}\\
\frac{B^*A}{2}&0
\end{array}\right)\\
&+&\left(\begin{array}{cc}
\frac{BB^*}{2}&0\\
0&\frac{B^*B}{2}
\end{array}\right).
\end{eqnarray*}
Since $\textit{Im}^2(e^{i\theta}T)\geq 0$, so we get,
\begin{eqnarray*}
\textit{Re}^2(e^{i\theta}T)&\leq&\left(\begin{array}{cc}
\textit{Re}^2(e^{i\theta}T)+\textit{Im}^2(e^{i\theta}T)&0\\
0&0
\end{array}\right)+\left(\begin{array}{cc}
0&\frac{A^*B}{2}\\
\frac{B^*A}{2}&0
\end{array}\right)\\
&+&\left(\begin{array}{cc}
\frac{BB^*}{2}&0\\
0&\frac{B^*B}{2}
\end{array}\right).
\end{eqnarray*}
Taking norm on both sides we get,
\begin{eqnarray*}
\|\textit{Re}(e^{i\theta}T)\|^2 &\leq&  \|\textit{Re}^2(e^{i\theta}A)+\textit{Im}^2(e^{i\theta}A)\|+\frac{1}{2}\|A^*B\|+\frac{1}{2}\|B\|^2.\\
   &\leq& 2w^2(A)+\frac{1}{2} \left(\|A^*B\|+\|B\|^2 \right).
\end{eqnarray*}
Taking supremum over $\theta \in \mathbb{R}$ we get, 
\[w^2(T)\leq 2w^2(A)+\frac{1}{2} \left(\|A^*B\|+\|B\|^2 \right).\]
This completes the proof.
\end{proof}

Now, using Theorem \ref{theorem:upper 4} and using the same technique as in the proof of Corollary \ref{corollary:upper A3}, we can obtain the following bound for numerical radius of any $2\times 2$ operator matrices.

\begin{cor}\label{corollary:upper 3}
Let $T=\left(\begin{array}{cc}
A&B\\
C&D
\end{array}\right),$ where $A\in B(\mathbb{H}_1), B\in B(\mathbb{H}_2,\mathbb{H}_1), C\in B(\mathbb{H}_1,\mathbb{H}_2),D\in B(\mathbb{H}_2).$ Then
\[w(T)\leq \sqrt{2w^2(A)+\frac{1}{2} \left( \|A^*B\|+\|B\|^2  \right)}+\sqrt{2w^2(D)+\frac{1}{2} \left(\|D^*C\|+\|C\|^2 \right)}.\]
\end{cor}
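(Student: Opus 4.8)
The plan is to mimic exactly the proof of Corollary \ref{corollary:upper A3}, but feeding in the bound from Theorem \ref{theorem:upper 4} instead of Theorem \ref{theorem:upper 2}. First I would split the arbitrary $2\times2$ operator matrix $T$ as a sum of two operator matrices, one with the first row $(A\ B)$ and zero second row, and the other with zero first row and second row $(C\ D)$, and invoke the triangle inequality for the numerical radius to get
\[
w(T)\leq w\!\left(\begin{array}{cc} A&B\\ 0&0\end{array}\right)+w\!\left(\begin{array}{cc} 0&0\\ C&D\end{array}\right).
\]

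Next I would apply the weak unitary invariance property $w(U^*SU)=w(S)$ with the unitary operator matrix $U=\left(\begin{array}{cc}0&I\\ I&0\end{array}\right)$ to the second term, which interchanges the two block rows and columns, turning $\left(\begin{array}{cc} 0&0\\ C&D\end{array}\right)$ into $\left(\begin{array}{cc} D&C\\ 0&0\end{array}\right)$. This puts both terms in the exact shape required by Theorem \ref{theorem:upper 4} (an operator matrix whose second block row is zero).

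Finally I would apply Theorem \ref{theorem:upper 4} to each of the two terms: the first gives $w\!\left(\begin{array}{cc} A&B\\ 0&0\end{array}\right)\leq\sqrt{2w^2(A)+\tfrac12(\|A^*B\|+\|B\|^2)}$, and the second gives $w\!\left(\begin{array}{cc} D&C\\ 0&0\end{array}\right)\leq\sqrt{2w^2(D)+\tfrac12(\|D^*C\|+\|C\|^2)}$. Adding these two estimates yields the claimed inequality. There is essentially no obstacle here: the only points to check carefully are that the unitary conjugation does indeed send $C$ to the off-diagonal slot and $D$ to the diagonal slot of the rearranged matrix (so that the roles of $A^*B$ and $D^*C$ come out right), and that the decomposition of $T$ into the two block-row matrices is legitimate as a sum of bounded operators — both of which are routine.
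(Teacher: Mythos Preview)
Your proposal is correct and is exactly the argument the paper has in mind: it explicitly says to use Theorem~\ref{theorem:upper 4} together with ``the same technique as in the proof of Corollary~\ref{corollary:upper A3}.'' The split $T=\left(\begin{smallmatrix}A&B\\0&0\end{smallmatrix}\right)+\left(\begin{smallmatrix}0&0\\C&D\end{smallmatrix}\right)$, the unitary conjugation by $U=\left(\begin{smallmatrix}0&I\\I&0\end{smallmatrix}\right)$ to turn the second summand into $\left(\begin{smallmatrix}D&C\\0&0\end{smallmatrix}\right)$, and the two applications of Theorem~\ref{theorem:upper 4} are precisely the intended steps.
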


\begin{remark}
	Considering the operator $T=\left(\begin{array}{cc}
	A&B\\
	0&0
	\end{array}\right),$ where $A=\left(\begin{array}{cc}
	0&0\\
	3&1
	\end{array}\right) $ and $  B=\left(\begin{array}{cc}
	1&2\\
	0&0
	\end{array}\right)$, it is easy to see that Theorem \ref{theorem:upper 4} gives $w(T)\leq \sqrt{8+\sqrt{10}}$ whereas  the bound obtained by Shebrawi in \cite [Th. 3.2]{S} gives $w(T)\leq \frac{1}{4}(12+\sqrt{10}).$ This indicates that for this operator the bound obtained by us is better than that obtained by Shebrawi.
\end{remark}

\section{\textbf{Lower bounds for the numerical radius of operator matrices}}

\noindent In  this section  we  first obtain a new lower bound for the numerical radius of a special class of $n\times n$ operator matrices.

\begin{theorem}\label{theorem:lower 1}
Let $T=\left(\begin{array}{ccccc}
0&0&\ldots&0&A_1\\
0&0&\ldots&A_2&0\\
\vdots&\vdots& &\vdots&\vdots\\
A_n&0&\ldots&0&0\\
\end{array}\right),$ where $A_i \in B(\mathbb{H})$ for each $i=1,2,\ldots,n$. Then
\[w(T)\geq \frac{1}{\sqrt{2}}\max_{1\leq i \leq n} \left\{\sqrt{w(A_iA_{n-i+1}+A_{n-i+1}A_i)},\sqrt{w(A_iA_{n-i+1}-A_{n-i+1}A_i)}\right\}.\]
\end{theorem}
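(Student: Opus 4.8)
The plan is to exploit the anti-diagonal structure of $T$ and reduce the whole question to a statement about $T^{2}$. The point is that $T^{2}$ is block-diagonal, that conjugating it by the ``column-reversal'' unitary reverses the order of its diagonal blocks, and that the sum and difference of these two operators therefore carry precisely the anticommutators $A_iA_{n-i+1}+A_{n-i+1}A_i$ and the commutators $A_iA_{n-i+1}-A_{n-i+1}A_i$ along the diagonal.

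First I would compute $T^{2}$. Writing $T=(T_{ij})$ with $T_{ij}=A_i$ when $j=n-i+1$ and $T_{ij}=0$ otherwise, a direct block multiplication gives $T^{2}=\bigoplus_{i=1}^{n}A_iA_{n-i+1}$, that is, $T^{2}$ is block-diagonal with $i$-th diagonal block $A_iA_{n-i+1}$. Let $V$ be the unitary operator matrix on $\mathbb{H}\oplus\cdots\oplus\mathbb{H}$ obtained from the $n\times n$ identity operator matrix by reversing the order of its columns, so that $V=V^{*}=V^{-1}$. Conjugating the block-diagonal operator $T^{2}$ by $V$ reverses the order of its diagonal blocks, so $VT^{2}V^{*}=\bigoplus_{i=1}^{n}A_{n-i+1}A_i$, and hence
\[
T^{2}\pm VT^{2}V^{*}=\bigoplus_{i=1}^{n}\bigl(A_iA_{n-i+1}\pm A_{n-i+1}A_i\bigr).
\]
Since the numerical radius of a finite direct sum is the maximum of the numerical radii of its summands, this gives $w(T^{2}\pm VT^{2}V^{*})=\max_{1\le i\le n}w(A_iA_{n-i+1}\pm A_{n-i+1}A_i)$ for each choice of sign.

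Next I would bound the left-hand side from above. By the triangle inequality for the numerical radius and its weak unitary invariance $w(VT^{2}V^{*})=w(T^{2})$, we get $w(T^{2}\pm VT^{2}V^{*})\le w(T^{2})+w(VT^{2}V^{*})=2w(T^{2})$, and the power inequality gives $w(T^{2})\le w(T)^{2}$. (The power inequality is classical; it also follows at once from Lemma \ref{lemma:B and S}, since for a unit vector $x$ that lemma yields $\|Tx\|^{2}+|\langle T^{2}x,x\rangle|\le 2w(T)\|Tx\|\le \|Tx\|^{2}+w(T)^{2}$, whence $|\langle T^{2}x,x\rangle|\le w(T)^{2}$ and, on taking the supremum over $x$, $w(T^{2})\le w(T)^{2}$.) Combining, $\max_{1\le i\le n}w(A_iA_{n-i+1}\pm A_{n-i+1}A_i)\le 2w(T)^{2}$ for each sign; taking square roots, dividing by $\sqrt{2}$, and then taking the maximum over the two signs and over $i$ yields the asserted lower bound.

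The argument is entirely elementary; the one place that needs a little care is the index bookkeeping in identifying the diagonal blocks of $T^{2}$ and of $VT^{2}V^{*}$ correctly (and, when $n$ is odd, noticing that the middle diagonal block of $T^{2}-VT^{2}V^{*}$ is $0$, which causes no trouble). No genuine obstacle is expected beyond this.
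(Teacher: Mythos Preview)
Your argument is correct and follows essentially the same route as the paper's proof: both use the column-reversal unitary $V$ (the paper's $U$), the block-diagonal structure of $T^{2}$, the triangle inequality, weak unitary invariance, and the power inequality $w(T^{2})\le w(T)^{2}$. The only cosmetic difference is that you conjugate $T^{2}$ by $V$ directly, whereas the paper first forms $U^{*}TU$ and then squares it; since $(U^{*}TU)^{2}=U^{*}T^{2}U$, this is the same computation.
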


\begin{proof}
Consider the unitary operator $U=\left(\begin{array}{ccccc}
0&0&\ldots&0&I\\
0&0&\ldots&I&0\\
\vdots&\vdots& &\vdots&\vdots\\
I&0&\ldots&0&0\\
\end{array}\right)$. \\
Then it is easy to see that,
\begin{eqnarray*}
&&T^2+(U^*TU)^2= \\
&&\left(\begin{array}{ccccc}
A_1A_n+A_nA_1&0&\ldots&0&0\\
0&A_2A_{n-1}+A_{n-1}A_2&\ldots&0&0\\
\vdots&\vdots& &\vdots&\vdots\\
0&0&\ldots&0&A_nA_1+A_1A_n\\
\end{array}\right)=D_1~~ \mbox{(say)}.
\end{eqnarray*}
Therefore, 
\begin{eqnarray*}
w(D_1)&=& w(T^2+(U^*TU)^2)\\
&\leq& w(T^2)+w \big((U^*TU)^2)\big)\\
&\leq & w^2(T)+w^2(U^*TU)\\
&=& 2w^2(T),~~\mbox{ by weak unitary invariance. }
\end{eqnarray*}
This shows that 
\[ \max \left\{w(A_iA_{n-i+1}+A_{n-i+1}A_i):1\leq i \leq n\right\} \leq 2w^2(T).\]
Now, we calculate $T^2-(U^*TU)^2$ and then using the same argument as above we can prove that 
\[\max \left\{w(A_iA_{n-i+1}-A_{n-i+1}A_i):1\leq i \leq n\right\} \leq 2w^2(T).\]
Therefore we conclude that 
\[w(T)\geq \frac{1}{\sqrt{2}}\max_{1\leq i \leq n} \left\{\sqrt{w(A_iA_{n-i+1}+A_{n-i+1}A_i)},\sqrt{w(A_iA_{n-i+1}-A_{n-i+1}A_i)}\right\}.\]
\end{proof}

Now using  Theorem \ref{theorem:lower 1} and the pinching inequalities (see \cite[p. 107]{B}),
 \[w\left(\begin{array}{cc}
A&B\\
C&D
\end{array}\right)\geq w\left(\begin{array}{cc}
A&0\\
0&D
\end{array}\right) ~~\mbox{and}~~ w\left(\begin{array}{cc}
A&B\\
C&D
\end{array}\right)\geq w\left(\begin{array}{cc}
0&B\\
C&0
\end{array}\right),\] where $A, B, C, D\in B(\mathbb{H})$, we obtain the following lower bound for the numerical radius of arbitrary $2\times2$ operator matrices.

\begin{cor}\label{corollary:lower 2}
Let $T=\left(\begin{array}{cc}
A&B\\
C&D
\end{array}\right),$ where $A, B, C, D\in B(\mathbb{H})$. Then
\[w(T)\geq \max \left \{w(A), w(D), \sqrt{\frac{1}{2}w(BC+CB)},\sqrt{\frac{1}{2}w(BC-CB)}\right \}.\]
\end{cor}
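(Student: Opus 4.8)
The plan is to derive Corollary \ref{corollary:lower 2} directly from Theorem \ref{theorem:lower 1} in the case $n=2$ together with the two pinching inequalities quoted just before the statement. First I would apply the second pinching inequality to the operator matrix $T=\left(\begin{array}{cc}A&B\\C&D\end{array}\right)$ to obtain
\[w\left(\begin{array}{cc}A&B\\C&D\end{array}\right)\geq w\left(\begin{array}{cc}0&B\\C&0\end{array}\right).\]
The matrix on the right is exactly the $n=2$ instance of the matrix in Theorem \ref{theorem:lower 1}, with $A_1=B$ and $A_2=C$ (so that $A_{n-i+1}$ becomes $C$ when $i=1$ and $B$ when $i=2$). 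Plugging into the conclusion of Theorem \ref{theorem:lower 1} and noting that both values $i=1,2$ give the same pair of commutator/anticommutator expressions $BC+CB$ and $BC-CB$ (up to the order of the factors, which does not affect the numerical radius since $w(XY)=w(YX)$ is not needed — rather $w(BC+CB)=w(CB+BC)$ trivially), I get
\[w\left(\begin{array}{cc}0&B\\C&0\end{array}\right)\geq \frac{1}{\sqrt{2}}\max\left\{\sqrt{w(BC+CB)},\sqrt{w(BC-CB)}\right\}.\]

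Next I would apply the first pinching inequality, $w\left(\begin{array}{cc}A&B\\C&D\end{array}\right)\geq w\left(\begin{array}{cc}A&0\\0&D\end{array}\right)$, and use the elementary fact that the numerical range of a block-diagonal operator is the convex hull of the numerical ranges of the blocks, so $w\left(\begin{array}{cc}A&0\\0&D\end{array}\right)=\max\{w(A),w(D)\}$. Combining the three lower bounds $w(A)$, $w(D)$, and $\frac{1}{\sqrt 2}\max\{\sqrt{w(BC+CB)},\sqrt{w(BC-CB)}\}$, all of which bound $w(T)$ from below, yields
\[w(T)\geq \max\left\{w(A),\,w(D),\,\sqrt{\tfrac12 w(BC+CB)},\,\sqrt{\tfrac12 w(BC-CB)}\right\},\]
which is the asserted inequality.

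There is essentially no serious obstacle here: the corollary is a straightforward assembly of Theorem \ref{theorem:lower 1} with the standard pinching and block-diagonal facts, both of which are cited or elementary. The only point requiring a moment's care is checking the index bookkeeping in Theorem \ref{theorem:lower 1} for $n=2$ — confirming that the ``$\max_{1\le i\le n}$'' collapses to the single pair $\{BC+CB,\;BC-CB\}$ and that the symmetric roles of $B$ and $C$ produce nothing new — but this is routine. I would present the proof in two or three lines exactly as sketched above.
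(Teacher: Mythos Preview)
Your proposal is correct and follows exactly the approach indicated in the paper: apply the two pinching inequalities to reduce to the diagonal and anti-diagonal blocks, use $w\left(\begin{smallmatrix}A&0\\0&D\end{smallmatrix}\right)=\max\{w(A),w(D)\}$ for the former, and invoke Theorem \ref{theorem:lower 1} with $n=2$, $A_1=B$, $A_2=C$ for the latter. The index check you mention is indeed routine (the $i=2$ case gives $CB\pm BC$, which has the same numerical radius as $BC\pm CB$).
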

\begin{remark}
The inequality obtained in Corollary \ref{corollary:lower 2} and the first inequality in \cite [Th. 3.7]{HKS} obtained by Hirzallah et al. are incomparable. Consider  $T=\left(\begin{array}{cc}
A&B\\
C&D
\end{array}\right),$ where $A=D=(0), B=(1),C=(2).$ Then Corollary \ref{corollary:lower 2} gives $w(T)\geq \sqrt{2}$ and \cite [Th. 3.7]{HKS} gives $w(T)\geq \frac{3}{2}.$ Again, if  we consider $T=\left(\begin{array}{cc}
A&B\\
C&D
\end{array}\right),$ where $A=D=\left(\begin{array}{cc}
0&0\\
0&0
\end{array}\right), B=\left(\begin{array}{cc}
-1&3\\
0&1
\end{array}\right),C=\left(\begin{array}{cc}
1&3\\
0&-1
\end{array}\right),$ then Corollary \ref{corollary:lower 2} gives $w(T)\geq \sqrt{3}$ and \cite [Th. 3.7]{HKS} gives $w(T)\geq \frac{3}{2}.$
\end{remark}

We next prove an inequality which gives a lower  bound for the  numerical radius of  $2 \times 2$ operator matrices of the form $\left(\begin{array}{cc}
A&B\\
0&0
\end{array}\right)$, where $A, B\in B(\mathbb{H}).$ To do so  we need the following lemma which follows from weak unitary invariance property of the numerical radius.

\begin{lemma}\label{lemma:3}
	Let $T=\left(\begin{array}{cc}
	A&B\\
	B&A
	\end{array}\right),$ where $A, B\in B(\mathbb{H})$. Then
	\begin{eqnarray*}
		w(T) &= & \max \left \{w(A+B), w(A-B) \right \}.
	\end{eqnarray*}
	\end {lemma}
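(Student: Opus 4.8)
The statement to prove is that for $T=\left(\begin{smallmatrix} A&B\\ B&A\end{smallmatrix}\right)$ with $A,B\in B(\mathbb{H})$, one has $w(T)=\max\{w(A+B),w(A-B)\}$. The plan is to diagonalize $T$ by a fixed unitary that does not depend on $A$ or $B$, namely the ``Hadamard-type'' unitary $U=\frac{1}{\sqrt 2}\left(\begin{smallmatrix} I&I\\ I&-I\end{smallmatrix}\right)$ on $\mathbb{H}\oplus\mathbb{H}$. First I would verify that $U$ is indeed unitary: $U^*=U$ and $U^2=I$, since $\frac12\left(\begin{smallmatrix} I&I\\ I&-I\end{smallmatrix}\right)\left(\begin{smallmatrix} I&I\\ I&-I\end{smallmatrix}\right)=\frac12\left(\begin{smallmatrix} 2I&0\\ 0&2I\end{smallmatrix}\right)=I$.

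Next I would compute $U^*TU=UTU$ directly by block multiplication. One gets $TU=\frac{1}{\sqrt2}\left(\begin{smallmatrix} A+B&A-B\\ A+B&-(A-B)\end{smallmatrix}\right)$ and then $UTU=\frac12\left(\begin{smallmatrix} 2(A+B)&0\\ 0&2(A-B)\end{smallmatrix}\right)=\left(\begin{smallmatrix} A+B&0\\ 0&A-B\end{smallmatrix}\right)$. So $T$ is unitarily equivalent to the block-diagonal operator $(A+B)\oplus(A-B)$. By the weak unitary invariance of the numerical radius, $w(T)=w\big((A+B)\oplus(A-B)\big)$, and since the numerical range of a direct sum is the convex hull of the numerical ranges of the summands, $w\big((A+B)\oplus(A-B)\big)=\max\{w(A+B),w(A-B)\}$. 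This yields the claimed equality.

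There is essentially no obstacle here — the proof is a routine block computation plus two standard facts (weak unitary invariance and the direct-sum formula $w(X\oplus Y)=\max\{w(X),w(Y)\}$), which is why the authors flag it as following ``from weak unitary invariance property of the numerical radius.'' The only point requiring a sliver of care is confirming that $w$ of a direct sum is the maximum of the pieces; this follows because $W(X\oplus Y)=\mathrm{conv}\big(W(X)\cup W(Y)\big)$ and taking moduli and suprema commutes appropriately with convex hulls of subsets of $\mathbb{C}$. I would state that fact in one line and then conclude.
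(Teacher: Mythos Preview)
Your proof is correct and is precisely the argument the paper has in mind: the lemma is stated without proof there, with only the remark that it ``follows from weak unitary invariance property of the numerical radius,'' and the standard way to realize this is exactly your Hadamard-type unitary $U=\tfrac{1}{\sqrt2}\left(\begin{smallmatrix} I&I\\ I&-I\end{smallmatrix}\right)$ which conjugates $T$ to $(A+B)\oplus(A-B)$. The only additional ingredient you use beyond what the paper explicitly names is the identity $w(X\oplus Y)=\max\{w(X),w(Y)\}$, which is standard and which you justify correctly.
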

	
	Now we prove the theorem.
	\begin{theorem}
		Let 	$A, B\in B(\mathbb{H}).$ Then 
		\[ w\left(\begin{array}{cc}
		A&B\\
		0&0
		\end{array}\right) \geq  \frac{1}{2} \max \left \{w(A+B), w(A-B) \right \}.\]
		
	\end{theorem}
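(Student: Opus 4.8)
The plan is to relate the given $2\times 2$ matrix with zero second row to the symmetric matrix appearing in Lemma~\ref{lemma:3}, so that the weak unitary invariance of $w(\cdot)$ can be exploited. Write $M=\left(\begin{smallmatrix} A & B\\ 0 & 0\end{smallmatrix}\right)$ and consider the $2\times 2$ operator matrix
\[
N=\left(\begin{array}{cc} A & B\\ B & A\end{array}\right)=M+\widetilde{M},\qquad \widetilde{M}=\left(\begin{array}{cc} 0 & 0\\ B & A\end{array}\right).
\]
By Lemma~\ref{lemma:3} we have $w(N)=\max\{w(A+B),w(A-B)\}$, so it suffices to show $w(N)\leq 2\,w(M)$. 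The key observation is that $\widetilde{M}$ is unitarily equivalent to $M$: with the flip unitary $U=\left(\begin{smallmatrix} 0 & I\\ I & 0\end{smallmatrix}\right)$ one checks directly that $U^*MU=\left(\begin{smallmatrix} 0 & 0\\ B & A\end{smallmatrix}\right)=\widetilde{M}$. Hence by the triangle inequality for the numerical radius and weak unitary invariance,
\[
w(N)=w(M+\widetilde{M})\leq w(M)+w(\widetilde{M})=w(M)+w(U^*MU)=2\,w(M).
\]
Combining the two displays gives $2\,w(M)\geq \max\{w(A+B),w(A-B)\}$, which is exactly the claimed inequality.

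The main steps, in order, are: first verify the matrix identity $N=M+U^*MU$ by a direct block computation (this is the only computational content and it is routine); second apply the triangle inequality and weak unitary invariance to bound $w(N)\leq 2w(M)$; third invoke Lemma~\ref{lemma:3} to evaluate $w(N)$ and rearrange. I do not anticipate a serious obstacle here: the only thing to be careful about is getting the placement of $B$ and $A$ in $U^*MU$ correct (the flip conjugation swaps both rows and columns), and confirming that the hypotheses of Lemma~\ref{lemma:3} — namely that the two diagonal blocks of $N$ coincide and the two off-diagonal blocks coincide — are genuinely met, which they are since $N$ has the form $\left(\begin{smallmatrix} A & B\\ B & A\end{smallmatrix}\right)$. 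Thus the proof is short and the potential pitfall is purely bookkeeping in the block-matrix algebra.
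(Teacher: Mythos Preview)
Your proof is correct and follows essentially the same approach as the paper: both arguments use the flip unitary $U=\left(\begin{smallmatrix}0&I\\I&0\end{smallmatrix}\right)$ to write $\left(\begin{smallmatrix}A&B\\B&A\end{smallmatrix}\right)=M+U^*MU$, apply the triangle inequality together with weak unitary invariance to get $w(N)\leq 2w(M)$, and then invoke Lemma~\ref{lemma:3}. The only differences are notational.
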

	\begin{proof}
		Let $T=\left(\begin{array}{cc}
		A&B\\
		0&0
		\end{array}\right).$ We consider an unitary operator matrix, $U=\left(\begin{array}{cc}
		0&I\\
		I&0
		\end{array}\right).$ Then  we get,
		\begin{eqnarray*}
			\left(\begin{array}{cc}
				A&B\\
				B&A
			\end{array}\right)&=& T+U^*TU\\
			\Rightarrow w\left(\begin{array}{cc}
				A&B\\
				B&A
			\end{array}\right) &\leq & w(T)+w(U^*TU)\\
			&=& 2w(T), ~~\mbox {by weak unitary invariance}\\
			\Rightarrow \max \left \{w(A+B), w(A-B) \right \}&\leq& 2w(T), ~~\mbox{using Lemma \ref{lemma:3}}.
		\end{eqnarray*}
		This completes the proof.
	\end{proof}

We end this section with the following theorem, in which we obtain an inequality for the lower bound of numerical radius of $2\times 2$ operator matrix,  which generalizes the inequality  $ w(T) \geq \| Re(T) \| $ and $ w(T) \geq \|Im(T) \|,$ obtained by  Kittaneh et al. \cite{KMY}.

\begin{theorem}\label{theorem:lower 3}
Let $T=\left(\begin{array}{cc}
0&A\\
B&0
\end{array}\right),$ where $A, B\in B(\mathbb{H})$. Then
\[w(T)\geq \frac{1}{2} \sup_{\theta \in \mathbb{R}}\left \| \textit{Re}(e^{i\theta}A) \pm \textit{Re}(e^{i\theta}B)\right \|,\]
 
\[w(T)\geq \frac{1}{2} \sup_{\theta \in \mathbb{R}}\left \| \textit{Im}(e^{i\theta}A) \pm \textit{Im}(e^{i\theta}B)\right \|.\]
\end{theorem}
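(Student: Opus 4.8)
The plan is to mimic the structure of the proof of the previous theorem (the one with $\left(\begin{smallmatrix}A&B\\B&A\end{smallmatrix}\right) = T + U^*TU$), but now extract real/imaginary parts rather than bare operators. Set $T=\left(\begin{smallmatrix}0&A\\B&0\end{smallmatrix}\right)$ and take the same unitary $U=\left(\begin{smallmatrix}0&I\\I&0\end{smallmatrix}\right)$. First I would compute $U^*TU = \left(\begin{smallmatrix}0&B\\A&0\end{smallmatrix}\right)$, so that $T + U^*TU = \left(\begin{smallmatrix}0&A+B\\A+B&0\end{smallmatrix}\right)$. By weak unitary invariance $w(U^*TU) = w(T)$, and the triangle inequality for the numerical radius gives $w\!\left(\begin{smallmatrix}0&A+B\\A+B&0\end{smallmatrix}\right) \le 2w(T)$. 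Using the standard identity $w\!\left(\begin{smallmatrix}0&X\\X&0\end{smallmatrix}\right) = w(X)$ (already invoked in the excerpt just before Theorem \ref{theorem:lower bounds}) with $X = A+B$, we obtain $w(A+B)\le 2w(T)$.

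Next I would produce the ``$-$'' sign. Replace $U$ by the unitary $V=\left(\begin{smallmatrix}I&0\\0&-I\end{smallmatrix}\right)$ — or equivalently note $\left(\begin{smallmatrix}I&0\\0&-I\end{smallmatrix}\right)\left(\begin{smallmatrix}0&A\\B&0\end{smallmatrix}\right)\left(\begin{smallmatrix}I&0\\0&-I\end{smallmatrix}\right) = \left(\begin{smallmatrix}0&-A\\-B&0\end{smallmatrix}\right)$ — and instead form $T - U^*TU$ where $U$ is again $\left(\begin{smallmatrix}0&I\\I&0\end{smallmatrix}\right)$: then $T-U^*TU = \left(\begin{smallmatrix}0&A-B\\B-A&0\end{smallmatrix}\right)$. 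A short check (conjugating by $\left(\begin{smallmatrix}I&0\\0&-I\end{smallmatrix}\right)$) shows $w\!\left(\begin{smallmatrix}0&A-B\\B-A&0\end{smallmatrix}\right) = w\!\left(\begin{smallmatrix}0&A-B\\A-B&0\end{smallmatrix}\right) = w(A-B)$, and again $w(T-U^*TU)\le w(T)+w(U^*TU) = 2w(T)$. Combining, $\max\{w(A+B),w(A-B)\}\le 2w(T)$, i.e.\ $w(T)\ge \tfrac12\max\{w(A+B),w(A-B)\}$.

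To finish, I would feed this bound through Yamazaki's formula (Lemma \ref{theorem:lemma3}). Since $w(A\pm B) = \sup_{\theta}\|\mathrm{Re}(e^{i\theta}(A\pm B))\| = \sup_{\theta}\|\mathrm{Re}(e^{i\theta}A)\pm\mathrm{Re}(e^{i\theta}B)\|$, and likewise $w(A\pm B) = \sup_{\theta}\|\mathrm{Im}(e^{i\theta}A)\pm\mathrm{Im}(e^{i\theta}B)\|$ via the second form of Lemma \ref{theorem:lemma3}, the inequality $w(T)\ge\tfrac12\max\{w(A+B),w(A-B)\}$ immediately yields both displayed conclusions: $w(T)\ge \tfrac12\sup_{\theta\in\mathbb{R}}\|\mathrm{Re}(e^{i\theta}A)\pm\mathrm{Re}(e^{i\theta}B)\|$ and $w(T)\ge \tfrac12\sup_{\theta\in\mathbb{R}}\|\mathrm{Im}(e^{i\theta}A)\pm\mathrm{Im}(e^{i\theta}B)\|$.

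The only genuinely delicate point is handling the ``$-$'' sign cleanly: one must be careful that $\left(\begin{smallmatrix}0&X\\-X&0\end{smallmatrix}\right)$ has the same numerical radius as $\left(\begin{smallmatrix}0&X\\X&0\end{smallmatrix}\right)$ (it does, by a diagonal unitary conjugation), and that the triangle inequality is applied to $T$ and $-U^*TU$ rather than $T$ and $U^*TU$ — but $w(-S)=w(S)$, so this costs nothing. Everything else is bookkeeping with the two lemmas already established.
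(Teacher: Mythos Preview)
Your argument is correct and uses the same core idea as the paper---conjugate by the swap unitary $U=\left(\begin{smallmatrix}0&I\\I&0\end{smallmatrix}\right)$ and apply the triangle inequality---but with the order of operations reversed. The paper fixes $\theta$, sets $H_\theta=\textit{Re}(e^{i\theta}T)$, forms $H_\theta\pm U^*H_\theta U$, and bounds the norm pointwise before taking the supremum; you instead form $T\pm U^*TU$ directly, obtain the clean intermediate bound $w(T)\ge\tfrac12\max\{w(A+B),w(A-B)\}$, and only then invoke Yamazaki's lemma to unwrap $w(A\pm B)$ as $\sup_\theta\|\textit{Re}(e^{i\theta}A)\pm\textit{Re}(e^{i\theta}B)\|$. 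Your route has the advantage of making the bound $w(T)\ge\tfrac12 w(A\pm B)$ explicit as a standalone inequality.

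One small slip: conjugating $\left(\begin{smallmatrix}0&X\\-X&0\end{smallmatrix}\right)$ by $\left(\begin{smallmatrix}I&0\\0&-I\end{smallmatrix}\right)$ returns $\left(\begin{smallmatrix}0&-X\\X&0\end{smallmatrix}\right)$, i.e.\ the negative of the original, not $\left(\begin{smallmatrix}0&X\\X&0\end{smallmatrix}\right)$. The correct diagonal unitary is $\left(\begin{smallmatrix}I&0\\0&iI\end{smallmatrix}\right)$, which yields $\left(\begin{smallmatrix}0&iX\\iX&0\end{smallmatrix}\right)$ and hence $w\!\left(\begin{smallmatrix}0&X\\-X&0\end{smallmatrix}\right)=w(iX)=w(X)$. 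This is cosmetic; the conclusion stands.
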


\begin{proof}
Let $H_{\theta}=\textit{Re}(e^{i\theta}T)$ and  $U=\left(\begin{array}{cc}
0&I\\
I&0
\end{array}\right)$ be an unitary operator. Then  we get,
\[H_{\theta}+U^*H_{\theta}U=\left(\begin{array}{cc}
0&\textit{Re}(e^{i\theta}A)+\textit{Re}(e^{i\theta}B)\\
\textit{Re}(e^{i\theta}A)+\textit{Re}(e^{i\theta}B)&0
\end{array}\right).\]
Taking norm on both sides we get,
\begin{eqnarray*}
\|\textit{Re}(e^{i\theta}A)+\textit{Re}(e^{i\theta}B)\|&=&\|H_{\theta}+U^*H_{\theta}U\|\\
&\leq &\|H_{\theta}\|+\|U^*H_{\theta}U\|\\
&= &2\|H_{\theta}\|\\
&\leq& 2w(T).
\end{eqnarray*}
Since this holds for all $\theta \in \mathbb{R}$, so we have
\[w(T)\geq \frac{1}{2} \sup_{\theta \in \mathbb{R}}\left \| \textit{Re}(e^{i\theta}A)+\textit{Re}(e^{i\theta}B)\right \|.\]
 Next we consider $K_{\theta}=\textit{Im}(e^{i\theta}T).$ Then we get, 
\[K_{\theta}+U^*K_{\theta}U=\left(\begin{array}{cc}
0&\textit{Im}(e^{i\theta}A)+\textit{Im}(e^{i\theta}B)\\
\textit{Im}(e^{i\theta}A)+\textit{Im}(e^{i\theta}B)&0
\end{array}\right).\]
Taking norm on both sides we get,
\begin{eqnarray*}
\|\textit{Im}(e^{i\theta}A)+\textit{Im}(e^{i\theta}B)\|&=&\|K_{\theta}+U^*K_{\theta}U\|\\
&\leq &\|K_{\theta}\|+\|U^*K_{\theta}U\|\\
&= &2\|K_{\theta}\|\\
&\leq& 2w(T).
\end{eqnarray*}
Since this holds for all $\theta \in \mathbb{R}$, so we have
\[w(T)\geq \frac{1}{2} \sup_{\theta \in \mathbb{R}}\left \| \textit{Im}(e^{i\theta}A)+\textit{Im}(e^{i\theta}B)\right \|.\]
\noindent Considering $H_{\theta}-U^*H_{\theta}U$ and $K_{\theta}-U^*K_{\theta}U$   and using similar arguments as above we can prove the remaining inequalities.
\end{proof}

\begin{remark}
If we take $A=B$ and $\theta=0$ in  Theorem \ref{theorem:lower 3}, then we get,  $w(A)\geq \|\textit{Re}(A)\|$ and $w(A)\geq \|\textit{Im}(A)\|.$ 
\end{remark}

\begin{remark}
There was a minor error in the calculation of bound in Remark 2.4 of \cite{BBP}, the estimation of bound obtained there should be $ 1.86317171 $ instead of $1.784$.  This was pointed out by the reviewer while reviewing the paper for Mathematical  Reviews (MR3933295), we thank him/her for that.
\end{remark}

\bibliographystyle{amsplain}

\end{document}